\numberwithin{equation}{section}
\theoremstyle{plain} 
\newtheorem{theorem}{Theorem}[section]
\newtheorem*{theorem*}{Theorem}
\newtheorem{lemma}[theorem]{Lemma}
\newtheorem{corollary}[theorem]{Corollary}
\newtheorem{remark}[theorem]{Remark}
\title[Sobolev trace inequality on balls]{Remarks on extremals of sharp Sobolev trace inequalities on the unit balls}
\author{Cheikh Birahim NDIAYE}
\address{Department of Mathematics Howard University Annex 3, Graduate School of Arts and Sciences DC 20059 Washington, USA.}
\email{cheikh.ndiaye@howard.edu}
\author{Liming Sun}
\address{Academy of Mathematics and Systems Science, the Chinese Academy of Sciences, Beijing 100190, China.}
\email{lmsun@amss.ac.cn}
 \date{\today \,(Last Typeset)}
 \subjclass[2020]{Primary 35J40; 58J32. Secondary 39B05; 46E35}
 \keywords{Bi-harmonic equations, Sobolev trace inequality, $Q$-curvature, $T$-curvature.}
\def\R{\mathbb{R}}
\begin{document}

\maketitle

\begin{abstract}
    We show explicit forms for extremals of some fourth order sharp trace inequalities on the unit balls recently proved by Ache-Chang. We also give a classification result of the bi-harmonic equation on $\mathbb{R}^4_+$ with some conformally covariant boundary conditions. 
\end{abstract}

\section{Introduction}

Let $\mathbb{B}^{n+1}\subset \mathbb{R}^{n+1}$, $n\ge 1$, be the unit ball with boundary $\partial \mathbb{B}^{n+1}=\mathbb{S}^n$.  Recall the following two Sobolev trace inequalities: For any $f \in C^{\infty}\left(\mathbb{S}^{n}\right)$ and  $v$ being a smooth extension of $f$ to $\mathbb{B}^{n+1}$, there hold
 \begin{itemize}
     \item[-] If $n=1$, then 
     \begin{align}\label{lebedev-main}
\log \left(\frac{1}{2 \pi} \oint_{\mathbb{S}^{1}} e^{f} d \sigma\right) \leqslant \frac{1}{4 \pi} \int_{\mathbb{B}^{2}}|\nabla v|^{2} d x+\frac{1}{2 \pi} \oint_{\mathbb{S}^{1}} f d \sigma.
\end{align}
Moreover, the equality holds if and only if $\Delta v=0$ and  $f=c-\log \left|1-\left\langle z_{0}, \xi\right\rangle\right|$,
where $c$ is a constant, $\xi \in \mathbb{S}^{1}$, and $z_{0}$ is some fixed point in the interior of $\mathbb{B}^{2}$.
\item[-] If $n>1$, then 
\begin{align}\label{LM>2}
\frac{\Gamma\left(\frac{n+1}{2}\right)}{\Gamma\left(\frac{n-1}{2}\right)} |\mathbb{S}^n|^{1 / n}\left(\oint_{\mathbb{S}^{n}}|f|^{\frac{2 n}{n-1}} d \sigma\right)^{\frac{n-1}{n}} \leqslant \int_{\mathbb{B}^{n+1}}|\nabla v|^{2} d x+\frac{n-1}{2}\oint_{\mathbb{S}^{n}}|f|^{2} d \sigma.
\end{align}
Moreover, the equality holds if and only if $\Delta v=0$ and  $f=c|1-\langle z_0,\xi\rangle|^{-(n-1)/2}$, where $c$ is a constant, $\xi\in \mathbb{S}^n$ and $z_0\in \mathbb{B}^{n+1}$.
 \end{itemize}

The first one \eqref{lebedev-main} was proved by Lebedev-Milin \cite{lebedev1951coefficients} and Osgood-Phillips-Sarnak \cite{osgood1988extremals}. The second one \eqref{LM>2} was proved by \citet{lions1985concentration},  \citet{escobar1988sharp} and \citet{beckner1993sharp}. 

A natural question is what the extremal $v$ look like in the unit ball. It is not hard to find 
 the harmonic extension of $-\log|1-\langle z_0,\xi\rangle|$ on $\mathbb{B}^2$ is
\begin{align}\label{ext-2}
    v(\xi)=-\log\left|\frac{\xi}{|\xi|} -{|\xi|} \omega_0\right|^2+\log(1+|\omega_0|^2), \quad \xi\in \mathbb{B}^2
\end{align}
and the harmonic extension of $|1-\langle z_0,\xi\rangle|^{-\frac{n-1}{2}}$ on $\mathbb{B}^{n+1}$ with $n>1$ is 
\begin{align}\label{ext->2}
v(\xi)=(1+|\omega_0|^{2})^{n-1}\left|\frac{\xi}{|\xi|} -{|\xi|} \omega_0\right|^{1-n},\quad  \xi\in \mathbb{B}^{n+1}
\end{align}
where $w_0=\frac{z_0}{|z_0|^2}(1-\sqrt{1-|z_0|^2})\in \mathbb{B}^{n+1}$. To obtain \eqref{ext-2}, one can use the observation 
\begin{align}
    \log |\omega_0-\xi|^2=\log (1-2\omega_0\cdot \xi+|\omega_0|^2)=\log (1-\langle z_0,\xi\rangle)+\log (1+|\omega_0|^2)
\end{align}
because $z_0=2\omega_0/(1+|\omega_0|^2)$.
%where $z_0$ and $\omega_0$ are related by 
One notices that $\log |\omega_0-\xi|^2$ is a harmonic function with a pole in $\mathbb{B}^{2}$. Using Green's function to annihilate the singularity, we can find the explicit forms for the extremal $v$. This approach also works for \eqref{ext->2}.

\citet{ache2017sobolev} generalized the Lebedev-Milin inequality and its counterpart \eqref{LM>2} to ones of \textit{order four}. More precisely, 
let $f \in C^{\infty}\left(\mathbb{S}^{n}\right)$ and $v$ be a smooth extension of $f$ to the unit ball $\mathbb{B}^{n+1}$. Let $\eta$ be the outward-pointing unit normal to $\mathbb{S}^n$ and $\bar \nabla$ be the gradient in $\mathbb{S}^n$. Then we have the sharp trace inequalities. 
\begin{itemize}
    \item[-] If $n=3$, then 
    \begin{align}\label{ache-main=4}
\quad \log \left(\frac{1}{2 \pi^{2}} \oint_{\mathbb{S}^{3}} e^{3f} d \sigma\right) \leq \frac{3}{16 \pi^{2}} \int_{\mathbb{B}^{4}}\left(\Delta v\right)^{2} d x+\frac{3}{8 \pi^{2}} \oint_{\mathbb{S}^{3}}|\bar{\nabla} f|^{2} d \sigma+\frac{3}{2\pi^2}\oint_{\mathbb{S}^3}fd\sigma
\end{align}
for any $v$ satisfying the homogeneous Neumann boundary condition $\eta v|_{\mathbb{S}^3}=0$. Equality holds if and only if $f=c-\log \left|1-\left\langle z_{0}, \xi\right\rangle\right|$ where $c$ is a constant, $\xi \in \mathbb{S}^{3}, z_{0}$ is some point in $\mathbb{B}^{4}$ and $v$ satisfies that 
\begin{align}\label{bi-h-v1}
\begin{cases}
    \Delta^2 v=0& \text{in }\mathbb{B}^{4},\\
    \eta v=0&\text{on }\mathbb{S}^3,\\
    v=f&\text{on }\mathbb{S}^3.
\end{cases}
\end{align}

\item[-] If $n>3$, then 
\begin{align}\label{ache-main>5}
a_n\left(\oint_{\mathbb{S}^{n}}|f|^{\frac{2 n}{n-3}} d \sigma\right)^{\frac{n-3}{n}} \leqslant \int_{\mathbb{B}^{n+1}}|\Delta v|^{2} d x+2 \oint_{\mathbb{S}^{n}}|\bar{\nabla} f|^{2} d \sigma+b_{n} \oint_{\mathbb{S}^{n}}|f|^{2} d \sigma,
\end{align} 
for any $v$ satisfying the  Neumann boundary condition $\left.\eta v\right|_{\mathbb{S}^{n}}=-\frac{n-3}{2} f.$
% \begin{align}\label{neu-bd>4}
% \left.\eta v\right|_{\mathbb{S}^{n}}=-\frac{n-3}{2} f.
% \end{align}
Here $a_n=2 \frac{\Gamma\left(\frac{n+3}{2}\right)}{\Gamma\left(\frac{n-3}{2}\right)} |\mathbb{S}^n|^{3 / n}$ and $b_{n}=(n+1)(n-3) / 2$. Equality holds if and only if $f=c\left|1-\left\langle z_{0}, \xi\right\rangle\right|^{\frac{3-n}{2}}$ and $v$ satisfies that 
\begin{align}\label{bi-h-v2}
     \begin{cases}
    \Delta^2 v=0& \text{in }\mathbb{B}^{n+1},\\
    \eta v=-\frac{n-3}{2}v&\text{on }\mathbb{S}^n,\\
    v=f&\text{on }\mathbb{S}^n.
    \end{cases}
\end{align}
\end{itemize}
% Moreover, equality holds if and only if $v$ is a bi-harmonic extension of a function of the form $c-\log \left|1-\left\langle z_{0}, \xi\right\rangle\right|$ if $n=3$ where $c$ is a constant, $\xi \in \mathbb{S}^{3}, z_{0}$ is some point in $\mathbb{B}^{4}$.

Similar to the second order case, we also want to know what the extremal functions of \eqref{ache-main=4} and \eqref{ache-main>5} look like in $\mathbb{B}^{n+1}$. We introduce the function $F:\overline{\mathbb{B}^{n+1}}\times \overline{\mathbb{B}^{n+1}}\to \mathbb{R}$ as
\begin{align}\label{def:F}
F(\xi,\omega)=\left|\frac{\xi}{|\xi|}-|\xi|\omega\right|.
\end{align}
%We will let $\eta_*=(0,-1)\in \partial\mathbb{B}^{n+1}$ be the south pole of $\mathbb{S}^n$ throughout this paper. 
\begin{theorem}\label{thm:ext>=4} 
Given any $z_0\in \mathbb{B}^{n+1}$, define $\omega_0=\frac{z_0}{|z_0|^2}(1-\sqrt{1-|z_0|^2})\in \mathbb{B}^{n+1}$. 
\begin{enumerate}
\item Let $v$ be a solution of \eqref{bi-h-v1} with  $f=-\log|1-\langle z_0,\xi\rangle|$ on $\mathbb{B}^4$. Then
\begin{align}
    v(\xi)=&\ -\log{F(\xi,\omega_0)^2}+\frac{(1-|\xi|^2)}{2}\left[\frac{1-|\omega_0|^2}{F(\xi,\omega_0)^2}-1\right]+\log (1+|\omega_0|^2).
\end{align}
\item  Let $v$ be a solution of \eqref{bi-h-v2} with $n>3$ and $f=|1-\langle z_0,\xi\rangle|^{(3-n)/2}$. Then
\begin{align}
      v(\xi)=\frac{\left({1+|\omega_0|^2}\right)^{\frac{n-3}{2}}}{F(\xi,\omega_0)^{n-3}}\left[1+\frac{(n-3)(1-|\omega_0|^2)(1-|\xi|^2)}{4F(\xi,\omega_0)^2}\right].
\end{align}
\end{enumerate}
\end{theorem}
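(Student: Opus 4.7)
The plan is to prove both parts by direct verification, relying on uniqueness of the biharmonic boundary value problems \eqref{bi-h-v1} and \eqref{bi-h-v2} (built into the Ache--Chang framework). For each case it therefore suffices to produce a smooth $v$ on $\overline{\mathbb{B}^{n+1}}$ satisfying (a) $\Delta^{2} v=0$ in $\mathbb{B}^{n+1}$, (b) $v|_{\mathbb{S}^n}=f$, and (c) the prescribed Neumann-type condition on $\mathbb{S}^n$.

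The main structural fact is the factorization
\[
F(\xi,\omega_0)^2 = 1-2\xi\cdot\omega_0 + |\xi|^2|\omega_0|^2 = |\omega_0|^2\,|\xi-\omega_0^*|^2,\qquad \omega_0^* := \omega_0/|\omega_0|^2,
\]
with $\omega_0^*\notin\overline{\mathbb{B}^{n+1}}$, so every negative power and logarithm of $F$ is smooth on $\overline{\mathbb{B}^{n+1}}$. Applying the radial identity $\Delta|x|^\alpha=\alpha(\alpha+n-1)|x|^{\alpha-2}$ in $\mathbb{R}^{n+1}$ to $|x|=|\xi-\omega_0^*|$ gives in order: (i) $F^{-(n-1)}$ is harmonic on $\mathbb{B}^{n+1}$ ($\alpha=-(n-1)$ makes $\alpha+n-1=0$); (ii) for $n=3$, $\log F$ is biharmonic on $\mathbb{B}^4$, because $\Delta\log|x|=2|x|^{-2}$ in $\mathbb{R}^4$ and $|x|^{-2}$ is then harmonic; (iii) for $n>3$, $\Delta F^{-(n-3)}=-2(n-3)|\omega_0|^2\,F^{-(n-1)}$, so $F^{-(n-3)}$ is biharmonic. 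Finally, for any harmonic $h$ on $\mathbb{B}^{n+1}$ the product $(1-|\xi|^2)h$ is biharmonic: $\Delta[(1-|\xi|^2)h]=-2(n+1)h-4\xi\cdot\nabla h$, and $\Delta(\xi\cdot\nabla h)=2\Delta h+\xi\cdot\nabla\Delta h=0$. Applied to $h=F^{-2}$ in case (1) and $h=F^{-(n-1)}$ in case (2), this shows each summand of the proposed $v$ is biharmonic, so $\Delta^2 v=0$.

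The boundary conditions are algebraic. On $\mathbb{S}^n$ the factor $1-|\xi|^2$ vanishes, killing the bracketed correction, and $F(\xi,\omega_0)^2|_{\mathbb{S}^n}=|\xi-\omega_0|^2=(1+|\omega_0|^2)|1-\langle z_0,\xi\rangle|$ via $z_0=2\omega_0/(1+|\omega_0|^2)$, which yields $v|_{\mathbb{S}^n}=f$ in both cases. For the Neumann condition take $\eta=\xi$ on $\mathbb{S}^n$ and use
\[
\xi\cdot\nabla F^2\big|_{\mathbb{S}^n}=2(|\omega_0|^2-\xi\cdot\omega_0)=|\xi-\omega_0|^2-(1-|\omega_0|^2),\qquad \eta(1-|\xi|^2)\big|_{\mathbb{S}^n}=-2.
\]
Substituted into $\eta v|_{\mathbb{S}^n}$, the normal derivative of the main term ($-\log F^2$ in case (1), $F^{-(n-3)}$ in case (2)) is cancelled against the $-2$ produced by $\eta(1-|\xi|^2)$ times the bracket in the correction---precisely because that bracket was designed to carry the coefficient of $(1-|\omega_0|^2)/F^2$ (resp. $(1-|\omega_0|^2)/F^{n-1}$) needed for this cancellation. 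What remains is $0$ in case (1) and $-\tfrac{n-3}{2}\,v|_{\mathbb{S}^n}$ in case (2), as required.

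The only real obstacle is bookkeeping: one must shuttle cleanly between $F^2$, $|\xi-\omega_0|^2$, $1+|\omega_0|^2$ and $1-\langle z_0,\xi\rangle$ on $\mathbb{S}^n$, and verify that the prefactors $(n-3)(1-|\omega_0|^2)/4$ in case (2) and $(1-|\omega_0|^2)/2$ in case (1) are precisely what annihilates the Neumann contribution from the main term. A conceptually cleaner alternative would write $v$ in Almansi form $v=h_1+(1-|\xi|^2)h_2$ with harmonic $h_1,h_2$, forcing $h_1$ to be the harmonic extension of $f$ (already given by \eqref{ext-2}--\eqref{ext->2}) and $h_2$ to be the harmonic extension of an explicit linear combination of $\eta h_1$ and $f$; however, matching those Poisson integrals with the closed forms above still reduces to the same sphere identities, so direct verification is the most efficient route.
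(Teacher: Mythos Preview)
Your proof is correct and takes a genuinely different route from the paper. The paper does \emph{not} verify the formulas directly; instead it pulls the problem back to $\mathbb{R}^{n+1}_+$ via the M\"obius map $\mathcal{S}$, invokes the classification of positive solutions to the half-space system \eqref{main-eq>5} from \cite{sun2016classification} (and, for $n=3$, the new Theorem~\ref{thm:main-1} on \eqref{main-eq}, whose proof occupies Section~3), and then pushes the resulting bubble back to the ball using the key identity \eqref{tx2xi}. Your argument bypasses all of this: once the formulas are written down, the factorization $F(\xi,\omega_0)^2=|\omega_0|^2\,|\xi-\omega_0^*|^2$ reduces biharmonicity to one-line radial computations, and the boundary checks are straightforward algebra combined with the elementary uniqueness for the Dirichlet--Neumann biharmonic problem.

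What each approach buys: yours is shorter, self-contained, and requires no analysis on the half-space---in particular it does not need the classification Theorem~\ref{thm:main-1} at all for part~(1). The paper's route, by contrast, explains \emph{where} the formulas come from (they are the M\"obius images of the half-space bubbles), and in the $n=3$ case it motivates and uses the independently interesting classification result. Two small caveats on your write-up: the factorization degenerates at $\omega_0=0$, where $\omega_0^*$ is undefined, so that trivial case ($F\equiv 1$, $v$ constant) should be set aside separately; and you should state the one-line uniqueness argument (if $w$ solves the homogeneous problem, then $w=\eta w=0$ on $\mathbb{S}^n$ forces $\int_{\mathbb{B}^{n+1}}(\Delta w)^2=0$), rather than deferring it to the Ache--Chang framework.
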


Proving the above theorem is more complicated than that of the second-order case because we have one more boundary condition in the fourth-order case.  We reformulate the problem on $\mathbb{R}^{n+1}_+$ via the M\"obius transformation (see \eqref{def:S}) as the extremals of \eqref{ache-main=4} and \eqref{ache-main>5} satisfy simpler equations on $\mathbb{R}^{n+1}_+$.    In fact, when $n>3$, \citet{case2018boundary} and  \citet{ngo2020higher} prove that \eqref{ache-main>5} is equivalent to a sharp trace inequality on $\mathbb{R}^{n+1}_+$ whose extremal function satisfies 
% \begin{align}
% a_n\left(\int_{\mathbb{R}^{n}}|U(x, 0)|^{\frac{2 n}{n-3}} d x\right)^{\frac{n-3}{n}} \leqslant \int_{\mathbb{R}_{+}^{n+1}}|\Delta U(x, t)|^{2} d x d t
% \end{align}
% for functions $U$ with $\partial_tU(x,0)=0$.
% The extremal function of the above equation satisfies
\begin{align}\label{main-eq>5}
\begin{cases}
  \Delta^2u=0&\text{in}\quad\mathbb{R}^{n+1}_+,\\
    \partial_t\Delta u=cu^{\frac{n+3}{n-3}}&\text{on}\quad \partial \mathbb{R}^{n+1}_+,\\
    \partial_t u=0&\text{on}\quad \partial \mathbb{R}^{n+1}_+,
\end{cases}
\end{align}
for some constant $c>0$. Fortunately, \citet{sun2016classification} has found the definitive solution to the above equation. One can move everything back to the unit ball through a M\"obius transformation. 

% On the other hand, using the stereographic projection from $\mathcal{S}:\mathbb{R}^4_+\to\mathbb{B}^4$, we can identity $(\mathbb{B}^4,e^{2v}g^*)$ with $(\mathbb{R}^4_+, e^{2w}g_0)$ for some $w$. Here $g^*$ is the adapted metric on $\mathbb{B}^4$ (see \cite{ache2017sobolev}). Using some conformal change rules, 
% $w$ will satisfy the following equations on $\mathbb{R}^4_+=\{(x,t):x\in \mathbb{R}^3,t>0\}$

In the case $n=3$, we do not find an equivalent formulation of \eqref{ache-main=4} on $\mathbb{R}^4_+$. However,  we notice the extremal of \eqref{ache-main=4} implies that $(\mathbb{B}^4,e^{2v}g^*)$ has vanishing $Q$-curvature, constant $T$-curvature and vanishing mean curvature.  Here $T$-curvature is defined by \citet{chang1997zeta} and $g^*$ is the \textit{adapted metric} on $\mathbb{B}^4$ (see \cite[Prop 2.2]{ache2017sobolev} and \cite{case2016fractional}). It is interesting that one has to use the adapted metric here. Via M\"obius transformation, these geometric conditions are equivalent to the existence of $u$ which satisfies the following equation\footnote{Here the coefficient $4$ in front of $e^{3u}$ is normalized for the convenience of \eqref{def:u}. The other solutions to  $\partial_t\Delta u=ce^{3u}$ differs from \eqref{def:u} by adding a constant.} 
\begin{align}\label{main-eq}
\begin{cases}
    \Delta^2u=0&\text{in}\quad\mathbb{R}^4_+,\\
    \partial_t\Delta u=4e^{3u}&\text{on}\quad \partial \mathbb{R}^4_+,\\
    \partial_t u=0&\text{on}\quad \partial \mathbb{R}^4_+.
\end{cases}
\end{align}

This equation is of a similar type to \eqref{main-eq>5} and has yet been studied in \cite{sun2016classification}. Here we continue to classify the solutions of this equation under the \textit{finite volume conditions} \eqref{finite-vol} and \eqref{finite-vol-in}. These are very natural geometric conditions. 
\begin{theorem}\label{thm:main-1}
Suppose that $u\in C^4(\overline{\R^4_+})$ satisfies \eqref{main-eq} and the following conditions.
\begin{align}
    (i)& \quad \int_{\mathbb{R}^3}e^{3u(x,0)}dx<\infty,\label{finite-vol}\\
    (ii)& \quad \int_{\mathbb{R}^4_+}e^{4u(x,t)}dxdt<\infty.\label{finite-vol-in}
\end{align}
% \begin{enumerate}
%     \item[(i)] 
%     \begin{equation}\label{finite-vol}
%     \int_{\mathbb{R}^3}e^{3u(x,0)}dx<\infty,
%     \end{equation}
%     \item[(ii)] \begin{align}\label{finite-vol-in} \int_{\mathbb{R}^4_+}e^{4u(x,t)}dX<\infty.
%     \end{align}
% \end{enumerate}
    Then either $\lim_{x\to\infty }\bar\Delta u(x,0)$ exists and negative, here $\bar{\Delta}$ is the Laplacian w.r.t. $x\in \R^3$ only, or then there exist $a\in \R^3$, $\lambda>0$ and $c\leq 0$ such that  $u=u_{a,\lambda}(x,t)+ct^2$ where
\begin{align}\label{def:u}
    u_{a,\lambda}(x,t)=\log\left(\frac{2\lambda}{(\lambda+t)^2+|x-a|^2 }\right)+\frac{2t\lambda}{(\lambda+ t)^2+|x-a|^2}.
\end{align}
\end{theorem}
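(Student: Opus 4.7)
The plan is to adapt the classification strategy of \citet{sun2016classification} for the power-type equation \eqref{main-eq>5} to the present logarithmic case. First I would set up a potential-theoretic representation. Extend $u$ by even reflection to $\tilde u(x,t) := u(x,|t|)$ on $\R^4$. Since $\partial_t u = 0$ on $\partial\R^4_+$, we have $\tilde u \in C^1(\R^4)$, and the boundary equation $\partial_t\Delta u = 4e^{3u}$ translates into the distributional identity $\Delta^2 \tilde u = 8\, e^{3u(x,0)}\, \delta_{\{t=0\}}$ in $\R^4$. Convolving against the biharmonic fundamental solution $-(8\pi^2)^{-1}\log|X|$ of $\Delta^2$ on $\R^4$ yields
$$u(X) \;=\; -\frac{1}{\pi^2}\int_{\R^3} \log\frac{|X-(y,0)|}{1+|y|}\, e^{3u(y,0)}\, dy \;+\; p(X), \qquad X\in\overline{\R^4_+},$$
where $p$ is a biharmonic polynomial on $\R^4$, even in $t$.

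Next I would pin down $p$. Set $\alpha := \frac{1}{\pi^2}\int_{\R^3} e^{3u(y,0)}\, dy$; the integral above equals $-\alpha\log|X| + O(1)$ at infinity, so the finite-volume condition \eqref{finite-vol-in} forces $p$ to be bounded above on $\R^4$. An Almansi-type argument---writing any biharmonic polynomial as $p = h_1 + |X|^2 h_2$ with $h_1, h_2$ harmonic, and using that non-constant harmonic polynomials on $\R^4$ change sign on every sphere---shows that a biharmonic polynomial on $\R^4$ which is bounded above has degree at most two. Combined with $t$-parity and absorbing linear $x$-terms via translation, this reduces $p$ to
$$p(x,t) \;=\; A + Q(x) + c\, t^2,$$
with $Q$ a negative semi-definite quadratic form on $\R^3$ and $c \leq 0$ (the last inequality is also what is needed for integrability of $e^{4ct^2}$ in $t$).

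Finally I would split into two cases. If $Q \neq 0$, then $\mathrm{tr}(\mathrm{Hess}\,Q)$ is a strictly negative constant; using $\bar\Delta_x \log|x-y| = |x-y|^{-2}$ in $\R^3$ and dominated convergence on the boundary integral, one shows that $\bar\Delta u(x,0)$ converges to this negative trace as $|x|\to\infty$, producing the first alternative. If $Q = 0$, the function $v := u - c\,t^2$ still solves \eqref{main-eq}, has the representation above with constant polynomial part, and satisfies $\alpha = 2$ (the value realized by $u_{a,\lambda}$); the moving-plane method in integral form, as in \cite{sun2016classification}, then forces $v$ to be radially symmetric about some $(a,0)\in\partial\R^4_+$, and reducing the resulting radial ODE identifies $v = u_{a,\lambda}$, giving the second alternative. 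The hardest step will be the polynomial rigidity---the biharmonic Liouville-type statement on $\R^4$ has no direct power-type analogue in \cite{sun2016classification} and requires a delicate separation of $u$'s intrinsic logarithmic growth from the polynomial correction $p$. A secondary difficulty is running the moving plane despite the residual $c\,t^2$, which breaks the pure conformal invariance of the equation.
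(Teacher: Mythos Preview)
Your overall architecture matches the paper's: represent $u$ as a logarithmic potential plus a biharmonic remainder $w=u-v$ (the paper writes $v$ exactly as your integral), pin $w$ down to a quadratic $c_* t^2+\sum a_i(x_i-x_i^0)^2+c_0$ with $c_*\le 0$, $a_i\le 0$, and then split on whether the spatial quadratic part vanishes. The place your plan is genuinely incomplete is the step you yourself flag as hardest. You assert that the convolution leaves ``a biharmonic polynomial'' and that the finite-volume condition makes it bounded above, after which Almansi plus sign-changing handles the degree. Neither half is justified as written: the remainder is a priori only biharmonic (not a polynomial---that is precisely what must be proved), and \eqref{finite-vol-in} gives only integral control on $e^{4u}$, not a pointwise upper bound on $u$ or on $p$. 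The paper closes this gap using the device of \citet{lin1998classification} and \citet{martinazzi2009classification}: apply Pizzetti's formula $\fint_{\partial B_r}\hat w=\hat w(X_0)+\tfrac{r^2}{8}\Delta\hat w(X_0)$ to the even extension, exponentiate and use Jensen to get $\exp(\tfrac{r^2}{2}\Delta\hat w(X_0))\le e^{-4\hat w(X_0)}\fint_{\partial B_r}e^{4\hat w}$, then feed in $\hat w\le \hat u+\alpha\log|X|+C$ together with \eqref{finite-vol-in} to force $\Delta\hat w\le 0$ everywhere. Liouville then makes $\Delta\hat w$ a nonpositive constant, and a biharmonic third-derivative estimate against the Pizzetti average gives $D^3\hat w\equiv 0$. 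This is the missing mechanism in your outline; your Almansi route does not obviously bypass it.

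For the case $Q=0$ you propose to run moving planes in the half-space and then solve a radial ODE. The paper takes a shorter path that you may prefer: restrict to $t=0$ to obtain the one-dimensional-lower integral equation $u(x,0)=\frac{1}{|\mathbb{S}^3|}\int_{\R^3}e^{3u(y,0)}\log\frac{|y|}{|x-y|}\,dy+c_0$, quote the classification of \citet{xu2005uniqueness} on $\R^3$ to get the boundary datum $\log\bigl(2\lambda/(\lambda^2+|x-a|^2)\bigr)$, and then observe that the biharmonic extension with $\partial_t u=0$ is unique, while a direct check (the paper's Lemma on $u_{a,\lambda}$) shows the explicit profile \eqref{def:u} already solves \eqref{main-eq}. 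This avoids both the moving-plane argument in $\R^4_+$ and the ODE reduction, and it also sidesteps your ``secondary difficulty'' with the residual $ct^2$, since you only need uniqueness of $v$ as a biharmonic Neumann extension of known boundary data.
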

% \begin{remark}
% One easily knows $\frac23t^3$ is also a solution of \eqref{main-eq}. If $c>0$ then \[\int_{\mathbb{R}^4_+}e^{4u_{a,\lambda}+ct^2}dX=\infty.\]
% \end{remark}

\begin{remark}
Conditions (i) and (ii) are sharp in the sense that if we remove both of them, then there are other solutions. For instance, $u=\frac23t^3$ satisfies \eqref{main-eq} but violates \eqref{finite-vol} and \eqref{finite-vol-in}. On the other hand $u=u_{a,\lambda}+ct^2$ for any $c>0$ satisfies \eqref{main-eq} and \eqref{finite-vol} but violates \eqref{finite-vol-in}. 
%I am not sure whether (i) can be removed or not. (i) might be deduced from (ii).
\end{remark}
% Suppose $v$ is an extremal function of \eqref{ache-main}. Then it satisfies 
% \begin{align}\label{intro:E-L-four}
% \begin{cases}
%     \Delta^2 v=0& \text{in }\mathbb{B}^4,\\
%     \displaystyle-\eta\Delta v-2\bar \Delta v+4=8\pi^2\left(\oint_{\mathbb{S}^3}e^{3v}\right)^{-1}e^{3v}&\text{on }\mathbb{S}^4,\\
%     \displaystyle\eta v=0&\text{on }\mathbb{S}^4.
%     \end{cases}
% \end{align}

As a byproduct of our arguments, we have the following corollary.
\begin{corollary}\label{corcla}
Suppose that $u\in C^4(\overline{\R^4_+})$ satisfies \eqref{finite-vol} and 
\begin{align}\label{u=int}
u(x, t)=\frac{1}{|\mathbb{S}^3|}\int_{\mathbb{R}^3}e^{3u(y, 0)}\log\frac{|y|^2}{|x-y|^2+t^2}dy
\end{align}
then there exists $a\in \mathbb{R}^3$ and $\lambda>0$ such that $u=u_{a, \lambda}$.
\end{corollary}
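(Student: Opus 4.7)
The plan is to verify the hypotheses of Theorem \ref{thm:main-1} for the $u$ given by \eqref{u=int}, apply that theorem to obtain the form $u = u_{a,\lambda} + ct^2$, and then use the explicit structure of \eqref{u=int} to rule out the polynomial correction. First I would check that the $u$ defined by \eqref{u=int} is a classical solution of \eqref{main-eq}. Differentiating twice under the integral and using $\Delta \log|X-Y|^2 = 4|X-Y|^{-2}$ on $\mathbb{R}^4$ gives $\Delta u(x,t) = -\frac{4}{|\mathbb{S}^3|}\int\frac{e^{3u(y,0)}}{|x-y|^2+t^2}\,dy$; since $|X-Y|^{-2}$ is harmonic on $\mathbb{R}^4 \setminus \{Y\}$, one further has $\Delta^2 u = 0$ in $\mathbb{R}^4_+$. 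The identity $\partial_t \log(|x-y|^2+t^2) = 2t/(|x-y|^2+t^2)$ immediately yields $\partial_t u|_{t=0} = 0$. Finally, $\partial_t\Delta u(x,t) = \frac{8t}{|\mathbb{S}^3|}\int \frac{e^{3u(y,0)}}{(|x-y|^2+t^2)^2}\,dy$; recognizing $\frac{2t}{|\mathbb{S}^3|(|x-y|^2+t^2)^2}$ as the Poisson kernel of $\mathbb{R}^4_+$, letting $t \to 0^+$ gives $\partial_t\Delta u(x,0) = 4e^{3u(x,0)}$, so \eqref{main-eq} holds.

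Next I would establish the interior volume condition \eqref{finite-vol-in} so that Theorem \ref{thm:main-1} becomes applicable. Splitting the integral in \eqref{u=int} according to whether $|y| \leq \tfrac{1}{2}|(x,t)|$ or not, and applying dominated convergence with the majorant supplied by \eqref{finite-vol}, one extracts the two-sided asymptotic
\begin{equation*}
    u(x,t) = -2\alpha\log|(x,t)| + O(1) \quad \text{as } |(x,t)| \to \infty,
\end{equation*}
where $\alpha := \frac{1}{|\mathbb{S}^3|}\int_{\mathbb{R}^3} e^{3u(y,0)}\,dy$. Specializing to $t = 0$ gives $e^{3u(y,0)} \asymp |y|^{-6\alpha}$ for $|y|$ large, so integrability on $\mathbb{R}^3$ via \eqref{finite-vol} forces $6\alpha > 3$, i.e.\ $\alpha > 1/2$. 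Consequently $e^{4u(x,t)} \lesssim |(x,t)|^{-8\alpha}$ at infinity with $8\alpha > 4$, which yields \eqref{finite-vol-in}.

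With Theorem \ref{thm:main-1} now applicable, I would rule out its first alternative by computing directly from \eqref{u=int}
\begin{equation*}
    \bar{\Delta} u(x,0) = -\frac{2}{|\mathbb{S}^3|}\int_{\mathbb{R}^3}\frac{e^{3u(y,0)}}{|x-y|^2}\,dy \longrightarrow 0 \quad \text{as } |x| \to \infty,
\end{equation*}
so this limit cannot be negative. Hence $u = u_{a,\lambda} + ct^2$ for some $a \in \mathbb{R}^3$, $\lambda > 0$ and $c \leq 0$. To conclude $c = 0$, observe from the formula for $\Delta u$ above that $\lim_{|(x,t)|\to\infty}\Delta u(x,t) = 0$, while a direct computation using $u_{a,\lambda} = \log(2\lambda) - \log((\lambda+t)^2 + |x-a|^2) + \frac{2t\lambda}{(\lambda+t)^2+|x-a|^2}$ (a smooth function decaying like a multiple of $-2\log|(x,t)|$ at infinity) shows $\Delta u_{a,\lambda} \to 0$ at infinity as well. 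Since $\Delta(u_{a,\lambda}+ct^2) = \Delta u_{a,\lambda} + 2c$, we must have $c = 0$ and thus $u = u_{a,\lambda}$.

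The hard part is making the asymptotic expansion in the second step genuinely two-sided so as to extract $\alpha > 1/2$ purely from \eqref{finite-vol}; this is a standard but delicate Chen--Li type analysis that requires careful decomposition of the domain of integration and must rely only on the hypothesis \eqref{finite-vol}, not on \eqref{finite-vol-in} (which is precisely what one is trying to establish at that stage).
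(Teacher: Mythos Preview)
Your argument is correct, but it takes a substantially longer route than the paper's. The paper exploits the hypothesis \eqref{u=int} far more directly: setting $t=0$ in \eqref{u=int} immediately gives the boundary integral equation \eqref{ux0=} (with $c_0=0$), whose solutions are already classified by Xu's theorem as \eqref{ux0=log}. Since \eqref{u=int} says precisely that $u$ coincides with the function $v$ of \eqref{def:v}, and the proof of Theorem~\ref{thm:main-1} (via Lemma~\ref{lem:ual} and uniqueness of the bi-harmonic extension with this boundary data) already shows that this $v$ equals $u_{a,\lambda}$, the corollary follows in three lines. In particular the paper never needs to verify \eqref{finite-vol-in}, because the assumption $u=v$ makes the decomposition $u-v$ in Lemma~\ref{lem:deltau} trivially zero and bypasses that lemma's reliance on interior finite volume.

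By contrast, you use \eqref{u=int} only as a source for the differential equation \eqref{main-eq}, and then invest real effort---the Chen--Li type asymptotics---to manufacture \eqref{finite-vol-in} so that the full Theorem~\ref{thm:main-1} applies, after which you still must rule out the first alternative and kill the $ct^2$ correction by further appeals to the integral formula. All of this is sound (your sketch of the two-sided asymptotic and the $\alpha>1/2$ bound is the standard argument, and your eliminations of the negative-limit alternative and of $c\neq 0$ via $\Delta u\to 0$ are clean), but it discards the key structural information in the hypothesis: that $u$ is \emph{already} the specific integral extension $v$, not merely some solution of \eqref{main-eq}. Recognizing this is exactly what collapses the proof.
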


\begin{remark}
 Corollary \ref{corcla} and Theorem \ref{thm:ext>=4} are expected to be used to describe the asymptotic behavior of sequences of conformal metrics with prescribed $T$-curvature,  $Q=0$ and $H=0$  on the whole background manifold rather than just at the boundary as available results in the literature provides. 
 %Similarly, it is also needed to describe the asymptotic behavior of flow lines of the prescribed $T$-curvature flow under vanishing $Q$-curvature and mean curvature on the whole manifold rather than just at the boundary. 
 This is one of the motivations why we consider the extensions of extremals of Ache-Chang's inequality.
\end{remark}

The paper is organized as follows. In section 2, we first give some preliminary on a M\"obius transformation which maps the upper half space to the unit ball.  Subsection 2.1 is devoted to finding the extension of extremals on dimension five and above. The case of dimension four is studied in the subsequent subsection 2.2. In the last section, we prove the classification theorem about a bi-harmonic equation on $\mathbb{R}^4_+$ with some conformally covariant boundary conditions.

\section{Sobolev trace inequality of order four}

Recall that the following M\"obius transformation which maps the upper half space to  the unit ball.
\begin{align}\label{def:S}
\begin{split}
\mathcal{S}:\R^{n+1}_+=\{X=(x,t)\}&\mapsto \mathbb{B}^{n+1}=\{\xi=(\xi',\xi_{n+1})\}\\
X&\to\frac{2(X+e_{n+1})}{|X+e_{n+1}|^2}-e_{n+1}
\end{split}
\end{align}
where $e_{n+1}=(0,\cdots,0,1)\in \mathbb{R}^{n+1}$.
Conversely
\begin{align}\label{x=xi,t=xi4}
\mathcal{S}^{-1}(\xi)=\frac{2(\xi+e_{n+1})}{|\xi+e_{n+1}|^2}-e_{n+1}.
% x=\frac{2\xi'}{|\xi'|^2+(\xi_{n+1}+1)^2},\quad t=\frac{2(\xi_{n+1}+1)}{|\xi'|^2+(\xi_{n+1}+1)^2}-1.
\end{align}
It is well-known that 
\begin{align}
    \mathcal{S}^*|d\xi|^2=\left(\frac{2}{|X+e_{n+1}|^2}\right)^2|dX|^2.
\end{align}
% For any $(a,\lambda)\in \mathbb{R}^{n+1}_+$, we denote $\omega=\mathcal{S}(a,\lambda)$. It is easy to derive the following identities (for instance, see \cite[Eq. (30)]{li2004remark})
% \begin{align}\label{tx2xi}
%      % \frac{\lambda}{|x-a|^2+|t+\lambda|^2}=\frac{(1-|\omega_0|^2)}{4}\ \frac{(|\xi'|^2+|\xi_{n+1}+1|^2)}{F(\xi,\omega_0)^2}.
%      \frac{\lambda}{|x-a|^2+|t+\lambda|^2}=\frac{(1-|\omega|^2)}{4}\ \frac{|\xi+e_{n+1}|^2}{F(\xi,\omega)^2}.
% \end{align}

\begin{align}\label{|xi|2}
    |x|^2+t^2=\frac{-4\xi_{n+1}}{|\xi'|^2+(\xi_{n+1}+1)^2}+1,\quad |\xi|^2=\frac{-4t}{|x|^2+(t+1)^2}+1.
\end{align}
\begin{lemma}For any $(a,\lambda)\in \mathbb{R}^{n+1}_+$, the following identity holds for $\omega=\mathcal{S}(a,\lambda)\in \mathbb{B}^{n+1}$   
\begin{align}\label{tx2xi}
     % \frac{\lambda}{|x-a|^2+|t+\lambda|^2}=\frac{(1-|\omega_0|^2)}{4}\ \frac{(|\xi'|^2+|\xi_{n+1}+1|^2)}{F(\xi,\omega_0)^2}.
     \frac{\lambda}{|x-a|^2+|t+\lambda|^2}=\frac{(1-|\omega|^2)}{4}\ \frac{|\xi+e_{n+1}|^2}{F(\xi,\omega)^2}.
\end{align}
\end{lemma}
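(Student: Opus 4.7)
The plan is to recognize the claimed identity as an instance of the conformal covariance of the upper-half-space Poisson kernel under $\mathcal{S}$, and to prove it by combining three well-known ingredients: the Möbius distance identity for $\mathcal{S}$, the characterization of $\mathcal{S}$ on the reflected point $(a,-\lambda)$, and the relation \eqref{|xi|2}. Specifically, I would introduce $A=(a,\lambda)$, its reflection $A^{\ast}=(a,-\lambda)$, and observe the basic rewrite $|x-a|^{2}+(t+\lambda)^{2}=|X-A^{\ast}|^{2}$, so the left side becomes $\lambda/|X-A^{\ast}|^{2}$.

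First I would record the Möbius identity
\[
|\mathcal{S}(X)-\mathcal{S}(Y)|^{2}=\frac{4|X-Y|^{2}}{|X+e_{n+1}|^{2}\,|Y+e_{n+1}|^{2}},
\]
which follows immediately from $\mathcal{S}(X)+e_{n+1}=2(X+e_{n+1})/|X+e_{n+1}|^{2}$ (inversion in the sphere of radius $\sqrt{2}$ centered at $-e_{n+1}$). Setting $Y=A^{\ast}$ expresses $|X-A^{\ast}|^{2}$ in terms of $|\xi-\mathcal{S}(A^{\ast})|^{2}$. Next I would show $\mathcal{S}(A^{\ast})=\omega/|\omega|^{2}$; one way is to compute $\mathcal{S}(A)$ and $\mathcal{S}(A^{\ast})$ directly from \eqref{def:S} and check the formula, but the conceptual reason is that $\mathcal{S}$ intertwines reflection across $\{t=0\}$ with inversion in the unit sphere (the two canonical involutions fixing the respective boundaries pointwise). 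Combined with the easy algebraic identity $F(\xi,\omega)^{2}=|\omega|^{2}\,|\xi-\omega/|\omega|^{2}|^{2}$, obtained by expanding both sides, this gives
\[
|X-A^{\ast}|^{2}=\frac{F(\xi,\omega)^{2}\,|X+e_{n+1}|^{2}\,|A^{\ast}+e_{n+1}|^{2}}{4|\omega|^{2}}.
\]

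To finish, I would eliminate the auxiliary factors using two bookkeeping identities that both come from \eqref{|xi|2}. The first is $|\xi+e_{n+1}|^{2}=4/|X+e_{n+1}|^{2}$, which immediately follows from $\xi+e_{n+1}=2(X+e_{n+1})/|X+e_{n+1}|^{2}$. The second is $|A^{\ast}+e_{n+1}|^{2}=|\omega|^{2}|A+e_{n+1}|^{2}$, which I would verify by expanding $|A+e_{n+1}|^{2}-|A^{\ast}+e_{n+1}|^{2}=4\lambda$ together with the relation $|\omega|^{2}=1-4\lambda/|A+e_{n+1}|^{2}$ (a direct consequence of \eqref{|xi|2}); equivalently, this says $(1-|\omega|^{2})|A+e_{n+1}|^{2}=4\lambda$, whence $|\omega|^{2}/|A^{\ast}+e_{n+1}|^{2}=(1-|\omega|^{2})/(4\lambda)$. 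Substituting into the expression for $\lambda/|X-A^{\ast}|^{2}$ collapses everything to the right-hand side of \eqref{tx2xi}.

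I do not anticipate a serious obstacle: each step is a standard Möbius/conformal computation. The only place where one must be slightly careful is the identification $\mathcal{S}(A^{\ast})=\omega/|\omega|^{2}$, since $A^{\ast}$ lies in the lower half space and its image is outside the unit ball; I would write this out cleanly to avoid sign errors, and I would keep the bookkeeping organized by introducing the notation $A,A^{\ast},\omega^{\ast}=\omega/|\omega|^{2}$ from the outset.
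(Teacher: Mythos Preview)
Your proposal is correct. Each step checks out: the M\"obius distance formula for $\mathcal{S}$, the identification $\mathcal{S}(a,-\lambda)=\omega/|\omega|^{2}$ (which follows from $|\omega|^{2}=\big((\lambda-1)^{2}+|a|^{2}\big)/\big((\lambda+1)^{2}+|a|^{2}\big)$), the identity $F(\xi,\omega)^{2}=|\omega|^{2}\,|\xi-\omega/|\omega|^{2}|^{2}$, and the two bookkeeping relations. The final substitution indeed collapses to the right-hand side of \eqref{tx2xi}.

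Your route differs from the paper's. The paper proceeds by direct expansion: it writes $X=\mathcal{S}^{-1}(\xi)$ in coordinates, denotes $A=|\xi'|^{2}+(\xi_{n+1}+1)^{2}$, and brute-force simplifies $|\lambda+t|^{2}+|x-a|^{2}$ line by line until the factor $|\omega|^{2}|\xi|^{2}-2\omega\cdot\xi+1=F(\xi,\omega)^{2}$ emerges. Your argument instead factors the computation through the geometry of the inversion: it isolates the single nontrivial step $\mathcal{S}(A^{\ast})=\omega^{\ast}$ (equivalently, that $\mathcal{S}$ intertwines reflection across $\{t=0\}$ with sphere inversion) and reduces everything else to the standard distortion law $|\mathcal{S}(X)-\mathcal{S}(Y)|^{2}=4|X-Y|^{2}/(|X+e_{n+1}|^{2}|Y+e_{n+1}|^{2})$. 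The paper's approach is shorter on the page and self-contained; yours explains \emph{why} the identity holds and would transport without change to the analogous kernels $\lambda^{k}/(|x-a|^{2}+(t+\lambda)^{2})^{k}$ or to other M\"obius maps between half-space and ball models.
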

\begin{proof} 
Denote $A=|\xi'|^2+|\xi_{n+1}+1|^2$ for short. We plug in \eqref{x=xi,t=xi4} to the LHS and achieve 
\begin{align*}
    &\ |\lambda+t|^2+|x-a|^2=A^{-2}[(2(\xi_{n+1}+1)+(\lambda-1)A)^2+|2\xi'-aA|^2]\\
    =&\ \frac{1}{A^2}[((\lambda-1)^2+|a|^2)A^2+4(\xi_{n+1}+1)(\lambda-1)A-4Aa\cdot \xi'+4A]\\
    =&\ \frac{1}{A}[((\lambda-1)^2+|a|^2)|\xi|^2+2(\lambda^2+|a|^2-1)\xi_{n+1}-4a\cdot \xi'+(\lambda+1)^2+|a|^2]\\
    =&\ \frac{(\lambda+1)^2+|a|^2}{|\xi'|^2+|\xi_{n+1}+1|^2}[|\omega|^2|\xi|^2-2\omega\cdot \xi+1]\\
    =&\ \frac{(\lambda+1)^2+|a|^2}{|\xi'|^2+|\xi_{n+1}+1|^2}\left|\frac{\xi}{|\xi|}-|\xi|\omega\right|^2
\end{align*}
where $\omega=\mathcal{S}(a,\lambda)$ with
\begin{align}
    \omega'=\frac{2a}{(\lambda+1)^2+|a|^2},\quad \omega_{n+1}=\frac{2(\lambda+1)}{(\lambda+1)^2+|a|^2}-1.
\end{align}
We also used the following fact
\begin{align}
|\omega|^2=\frac{-4\lambda}{(\lambda+1)^2+|a|^2}+1.
\end{align}
%Using the identity $F(\xi,\eta_*)^2=|\xi'|^2+(\xi_{n+1}+1)^2$, 
Consequently
\begin{align}
    \frac{\lambda}{|\lambda+t|^2+|x-a|^2}=\frac{1-|\omega|^2}{4}\frac{|\xi'|^2+|\xi_{n+1}+1|^2}{F(\xi,\omega)^2}.
    %=\frac{1-|\omega_0|^2}{4}\frac{F(\xi,\eta_*)^2}{F(\xi,\omega_0)^2}.
\end{align}
\end{proof}

\subsection{Ache-Chang inequality on dimension five and above}
In this subsection, we shall consider the case $n>3$. 

\begin{proof}[Proof of Theorem \ref{thm:ext>=4} Part (1)] Suppose $v$ is the extension. Obviously when $z_0=0$, $v$ will be a positive constant. Since $v$ depends continuously on the boundary value, we can suppose $|z_0|$ is small enough such that $v>0$ in $\mathbb{B}^{n+1}$.

We define
\begin{align}
U(x,t)=v(\mathcal{S}(x,t))\left(\frac{2}{|x|^2+(1+t)^2}\right)^{\frac{n-3}{2}}\label{w=vS>4}
\end{align}
Then it is easy to see that
\begin{align}
\int_{\mathbb{R}^{n}}|U(x,0)|^{\frac{2n}{n-3}}dx=\oint_{\mathbb{S}^{n}}|v|^{\frac{2n}{n-3}}d\sigma
\end{align}
and $\eta v=-\frac{n-3}{2}v$ is equivalent to $\partial_t U(x,0)=0$ for $\forall\ x\in \mathbb{R}^n$.
After some computation (for instance, see \citet[Eq. (4.9)]{ngo2020higher}\footnote{The M\"obius transformation in \cite{ngo2020higher} is different from ours (see \eqref{def:S}) by a negative sign in the $\xi_{n+1}$ coordinate. However, this difference does not affect the energy identity.}), we obtain
\begin{align}
\int_{\mathbb{R}^{n+1}_+}|\Delta U|^2dX=\int_{\mathbb{B}^{n+1}}|\Delta v|^{2} d x+2 \oint_{\mathbb{S}^{n}}|\bar{\nabla} f|^{2} d \sigma+b_n \oint_{\mathbb{S}^{n}}|f|^{2} d \sigma.
\end{align}
Consequently \eqref{ache-main>5} is equivalent to the sharp trace inequality
\begin{align}
a_n\left(\int_{\mathbb{R}^{n}}|U(x, 0)|^{\frac{2 n}{n-3}} d x\right)^{\frac{n-3}{n}} \leqslant \int_{\mathbb{R}_{+}^{n+1}}|\Delta U(x, t)|^{2} d x d t
\end{align}
for functions $U$ with $\partial_tU(x,0)=0$. The extremal function of the above equation satisfies
\begin{align}\label{E-L-5-R}
\begin{cases}
    \Delta^2 U=0& \text{in }\mathbb{R}^{n+1}_+,\\
    \partial_t\Delta U=cU^{\frac{n+3}{n-3}}&\text{on }\mathbb{R}^n,\\
    \partial_t U=0&\text{on }\mathbb{R}^n,
    \end{cases}
\end{align}
for some $c>0$.
Using \eqref{w=vS>4}, we know that $U$ is also positive. The positive solutions to the above equations have been studied by \cite{sun2016classification}. It follows from \eqref{w=vS>4} that $U(x,t)=o(|x|^2+t^2)$. Therefore, one can apply \cite[Rmk 1.2]{sun2016classification}  to achieve that there exists $\lambda>0$, $a\in \mathbb{R}^3$ and 
\begin{align}
%w(x,t)= c\left(\frac{\lambda}{(\lambda+t)^2+|x-a|^2}\right)^{\frac{n-3}{2}}+c(n-3)t\left(\frac{\lambda}{(\lambda+t)^2+|x-a|^2}\right)^{\frac{n-1}{2}}\\
U(x,t)=c\left(\frac{\lambda}{(\lambda+t)^2+|x-a|^2}\right)^{\frac{n-3}{2}}\left[1+\frac{(n-3)t\lambda}{(\lambda+t)^2+|x-a|^2}\right]
\end{align}
for some constant $c>0$. Now we plug in the above equation to 
\begin{align}
v(\xi)=w(\mathcal{S}^{-1}(\xi))\left(\frac{2}{|\xi'|^2+(1+\xi_{n+1})^2}\right)^{\frac{n-3}{2}}\label{v=wS>4}
\end{align}
and 
using \eqref{tx2xi} and \eqref{x=xi,t=xi4} to obtain that 
\begin{align}
    v(\xi)=c\frac{\left({1-|\omega_0|^2}\right)^{\frac{n-3}{2}}}{F(\xi,\omega_0)^{n-3}}\left[1+\frac{(n-3)(1-|\omega_0|^2)(1-|\xi|^2)}{4F(\xi,\omega_0)^2}\right]
    %\frac{F(\xi,\eta_*)^{n-3}}{F(\xi,\omega_0)^{n-3}}+c(n-3)(1-|\xi|^2)\frac{F(\xi,\eta_*)^{n-3}}{F(\xi,\omega_0)^{n-1}}.
\end{align}
for some $c>0$. Here $\omega_0=\mathcal{S}(a,\lambda)$ and $F$ is defined in \eqref{def:F}. One can determine $c(1-|\omega_0|^2)^{(n-3)/2}=(1+|\omega_0|^2)^{(n-3)/2}$ using $v(\xi)=|1-\langle z_0,\xi\rangle|^{(3-n)/2}$ on $\mathbb{S}^n$.

It follows that $v$ is strictly positive and depends on $z_0$ smoothly. Since the above proof only requires $v$ to be positive, using the continuity method, one can prove that it holds for any $z_0\in \mathbb{B}^{n+1}$.

% It follows that $v$ is positive and depends on $z_0$ smoothly. By using the continuity method, one can prove that it holds for any $z_0\in \mathbb{B}^{n+1}$.

\end{proof}

\subsection{Ache-Chang inequality on dimension four}
Recall the Paneitz operator defined on a smooth compact Riemannian manifold $(X^{n+1},g)$ for $n\geq 3$,
\begin{align}
\left(L_{4}\right)_{g}=\left(-\Delta_{g}\right)^{2}+\delta_{g}\left(\left(4 P_{g}-(n-1) J_{g} g\right)(\nabla \cdot, \cdot)\right)+\frac{n-3}{2}\left(Q_{4}\right)_{g}
\end{align}
where $\delta$ denotes divergence, $\nabla$ denotes gradient on functions, $P_{g}$ the Schouten tensor $P_g=\frac{1}{n-1}\left(Ric_g-J_gg\right)$, $J_{g}=\frac{1}{2n} R_{g}, R_{g}$ is the scalar curvature of the metric $g$ and $Q_{4}$ is the $Q$-curvature
$$
\left(Q_{4}\right)_{g}=-\Delta_{g} J_{g}+\frac{n+1}{2} J_{g}^{2}-2\left|P_{g}\right|_{g}^{2} .
$$
In the following, we shall write $L_4=(L_4)_g$ for short when the background metric is understood.

When $n=3$, we have the following  conformal invariance property of $L_4$ and $Q_4$,
\begin{align}
\begin{split}\label{P4-conformal}
\left(L_{4}\right)_{\hat{g}} U&=e^{-4 \tau}\left(L_{4}\right)_{g}(U),\\
(Q_4)_{\hat g}&=e^{-4\tau}\left((L_4)_g\tau+(Q_4)_g\right),
\end{split}
\end{align}
for any smooth function $U$ on $X^4$ and $\hat g=e^{2\tau}g$.

When $X^4$ has boundary $M$, \citet{chang1997zeta} derived a conformally covariant boundary operator $P_{3}^{b}$ and associated $T$-curvature $T_3$. Suppose $h$ is the induced metric of $(X^4,g)$ on $M$. Let us use $\bar{\Delta}$ and $\bar\nabla$ denote the Laplacian and connection on $(M,h)$. Assume $A$ is the second fundamental form of $M$ and $H=tr_{h}A$ is the mean curvature. Then
\begin{align*}
P_{3}^b(u)&=-\frac{1}{2} \eta \Delta u-\bar{\Delta} \eta u+\left\langle A-\frac{2}{3} H h, \bar{\nabla}^{2} u\right\rangle+\frac{1}{3}\langle\bar{\nabla} H, \bar{\nabla} u\rangle-(\operatorname{Ric}(\eta, \eta)-2 J) \eta u\\
T_3&=\frac{1}{2} \eta J-\frac{1}{3} \bar{\Delta} H+J H-\langle R(\eta, \cdot, \eta, \cdot), A\rangle-\frac{1}{3} \operatorname{tr} A^{3}+\frac{1}{9} H^{3} .
\end{align*}
Here  $\eta=\eta_g$ is the outward-pointing unit normal to $M$.
% Let us recall some conformally covariant operators on four-dimensional manifolds with boundaries.

% It is well-known that the Paneitz operator $\left(P_{4}\right)_{g}$ and Q-curvature $\left(Q_{4}\right)_{g}$ have conformal covariance properties, but it turns out that 

If $\hat g=e^{2 \tau} g$, we have the transformation laws
\begin{align}
\begin{split}\label{P3b-conformal}
    \left(P_{3}^{b}\right)_{\hat g}U&=e^{-3 \tau}\left(P_{3}^{b}\right)_{g}U,\\
    \left(T_{3}\right)_{\hat g}&= e^{-3 \tau}\left(    \left(P_{3}^{b}\right)_{g} \tau+\left(T_{3}\right)_{g}\right).
    \end{split}
\end{align}
We also have the relation of mean curvature
\begin{align}\label{H-conformal}
H_{\hat g}=e^{-\tau}\left(H_g+n\eta_g\tau\right).
\end{align}

For the model case $(\mathbb{B}^4,\mathbb{S}^3,g_0)$ where $g_0$ is the Euclidean metric, one has (for instance, see \cite[(6.6)]{ache2017sobolev})
\begin{align}\label{geo-facts}
(P_{3}^{b})_{g_0}=-\frac{1}{2} \eta \Delta-\bar{\Delta} \eta-\bar{\Delta},\quad (T_{3})_{g_0}=2,\quad H_{g_0}=3.
\end{align}

On $\mathbb{B}^4$, there is a special metric $g^*=e^{1-|\xi|^2}g_0$ which has nice properties. It is called \textit{adapted metric} in \cite{case2016fractional} (also appeared in \cite{fefferman2002}). Under this metric $g_*$, $\mathbb{S}^3$ is totally geodesic and 
\begin{align}
&\left(Q_{4}\right)_{g^{*}}=\left(Q_{4}\right)_{g_{0}}=0 ,\\
&\left(T_{3}\right)_{g^{*}}=\left(T_{3}\right)_{g_{0}}=2.\label{T3-conformal}
\end{align}

% In the bi-harmonic case, the above method seems not applicable. We propose to map the problem to the upper half-plane. 

Now we are ready to prove the main theorem of this subsection. 
\begin{proof}[Proof of Theorem \ref{thm:ext>=4} part (2)]
Suppose that $v$ is the extension. Then $v$ will be an extremal function for the \eqref{ache-main=4}. It is easy to see that the Euler-Lagrange equation of \eqref{ache-main=4} is 
\begin{align}\label{E-L-four}
\left\{\begin{array}{lll}
    \Delta^2 v=0& \text{in }\mathbb{B}^4,\\
    -\eta\Delta v-2\bar \Delta v+4=8\pi^2\left(\int_{\mathbb{S}^3}e^{3v}\right)^{-1}e^{3v}&\text{on }\mathbb{S}^4,\\
    \eta v=0&\text{on }\mathbb{S}^4.
    \end{array}
\right.
\end{align}
Here $\Delta=\Delta_{g_0}$ and $\eta=\eta_{g_0}$. 
%It follows from the regularity theory that $v$ is unique and smooth on $\bar{\mathbb{B}^4}$. 
Let $g=e^{2v}g^*=e^{2v+1-|\xi|^2}|d\xi|^2$, here $g^*$ is the so-called \textit{adapted metric}. Denote $\tau=v+(1-|\xi|^2)/2$.  The first line of \eqref{E-L-four}
implies $(P_4)_{g_0}\tau=\Delta_{g_0}^2\tau=\Delta_{g_0}^2v=0$. Then applying \eqref{P4-conformal} with $\tau=v+(1-|\xi|^2)/2$,  the first line of \eqref{E-L-four} is equivalent to 
\begin{align}
(Q_4)_{g}&=e^{-4\tau}((P_4)_{g_0}\tau+(Q_4)_{g_0})=0.
\end{align}
Applying \eqref{P3b-conformal} with $\tau=v+(1-|\xi|^2)/2$, the second line of \eqref{E-L-four} is equivalent to 
\begin{align}
(T_3)_{g}=e^{-3\tau}((P_3^b)_{g_0}\tau+(T_3)_{g_0})=e^{-3v}\left(-\frac{1}{2}\eta_{g_0}\Delta_{g_0}v-\bar{\Delta}_{g_0}v+2\right)=const>0
\end{align}
where we have used \eqref{geo-facts}, $\tau=v$ on $\mathbb{S}^4$ and $\eta_{g_0}v=0$. Applying \eqref{H-conformal} with the same $\tau$ as before, the third line of \eqref{E-L-four} is equivalent to 
\begin{align}
H_g=e^{-\tau}(H_{g_0}+3\eta_{g_0}\tau)=e^{-v}(3+3(\eta_{g_0}v-1))=0.
\end{align}
Combining the above analysis, \eqref{E-L-four} is equivalent to 
\begin{align}\label{Q-T-H}
    (Q_4)_g=0,\quad (T_3)_g=const>0,\quad H_g=0.
\end{align}
%Here we have used the fact that $\eta_g=\eta_{g_0}$.
 Using M\"obius transformation \ref{def:S}, we can find $w$ such that $(\mathbb{B}^4\setminus\{(0,0,0,-1)\}, g)$ is isometric to $(\mathbb{R}^4_+,e^{2w}(|dx|^2+dt^2))$ through
\begin{align}
    \mathcal{S}^*(e^{2v+1-|\xi|^2}|d\xi|^2)=e^{2w}(|dx|^2+dt^2)
\end{align}
where $v$ and $w$ are related by 
\begin{align}
    w&=v\circ\mathcal{S}+\frac{1}{2}-\frac12\frac{|x|^2+(t-1)^2}{|x|^2+(t+1)^2}+\log \frac{2}{x^2+(1+t)^2},\label{w=vS}\\
    v&=w\circ \mathcal{S}^{-1}-\frac{1-|\xi|^2}{2}+\log \frac{2}{|\xi'|^2+(1+\xi_4 )^2}\label{v=wS}.
\end{align}
By the isometry, we can think of \eqref{Q-T-H} as referring to $\mathcal{S}^* g$ on $\mathbb{R}^4_+$. Thus using $|dx|^2+dt^2$ as the background metric and the conformal properties of $L_4$, $P^b_3$ and $H$, we rewrite \eqref{Q-T-H} as the following
\begin{align}\label{wR4}
\begin{cases}
    \Delta^2 w=0&\text{in }\mathbb{R}^4_+,\\
    \partial_t \Delta w=ce^{3w}&\text{on }\mathbb{R}^3,\\
    {\partial_t}w=0&\text{on }\mathbb{R}^3,
\end{cases}
\end{align}
for some constant $c>0$. Moreover, the isometry also implies
\begin{align}\label{fin-vol-R4}
    \int_{\mathbb{R}^4_+}e^{4w(x,t)}dxdt=\text{vol}(\mathbb{B}^4,g)<\infty,\quad \int_{\mathbb{R}^3_+}e^{3w(x,0)}dx=\text{vol}(\mathbb{S}^3,g|_{\mathbb{S}^3})<\infty.
\end{align}

The solution to \eqref{wR4} with \eqref{fin-vol-R4}  has been studied by Theorem \ref{thm:main-1}. 
Since $v$ is smooth on $\overline{\mathbb{B}^4}$, then \eqref{w=vS} leads to $w(x,t)=o(|x|^2+t^2)$ as $|x|+t\to \infty$. Therefore, it follows from Theorem \ref{thm:main-1} that there exists $\lambda>0$, $a\in \mathbb{R}^3$ and a constant $c$ such that  
\begin{align}
    w(x,t)=\log\frac{2\lambda}{(\lambda+t)^2+|x-a|^2}+\frac{2t\lambda}{(\lambda+t)^2+|x-a|^2}+c.
\end{align}
%Using the \eqref{v=wS} and the stereographic projection, we obtain
Plugging in \eqref{tx2xi} with $\omega_0=\mathcal{S}(a,\lambda)$ and \eqref{x=xi,t=xi4} to \eqref{v=wS}, one obtains
\begin{align*}
    v(\xi)=&\ \log F(\xi,\omega_0)^{-2}+\frac{(1-|\omega_0|^2)(1-|\xi|^2)}{2F(\xi,\omega_0)^2}-\frac{1}{2}(1-|\xi|^2)+c\\
    =&\ -\log{F(\xi,\omega_0)^2}+\frac{1-|\xi|^2}{2}\left[\frac{1-|\omega_0|^2}{F(\xi,\omega_0)^2}-1\right]+c
\end{align*}
The precise value of $c$ can be determined through $v(\xi)=-\log|1-\langle z_0,\xi\rangle|$. 
This completes the proof.
% Since $v$ is smooth, one must have $ c_1=0$. Consequently 
% \begin{align}
%      v(\xi)=&\ -\log\left|\frac{\xi}{|\xi|}-|\xi|\omega_0\right|^2-\frac12(1-|\xi|^2)\left(1-\frac{1-|\omega_0|^2}{\left|\frac{\xi}{|\xi|}-|\xi|\omega_0\right|^{2}}\right)+c.
% \end{align}
\end{proof}

\begin{remark}
The above method also applies to the harmonic case. The proof is simpler in that case because the adapted metric $g^*$ for $\mathbb{B}^{n+1}$ is identical to the Euclidean metric $g_0$ (see \cite[Rmk 2.4]{ache2017sobolev}).
\end{remark}

\section{Classification of the solution to a bi-harmonic equation}
In this section, we will prove Theorem \ref{thm:main-1}. The strategy is to separate the nonlinear effect, by subtracting a function constructed from nonlinear boundary conditions. Such trick has been used by  \cite{jin2015existence,sun2016classification}. The resulting linear fourth order equation can be classified under the finite volume condition. The proof here is greatly inspired by  \citet{lin1998classification}, who initiated the classification of some conformal bi-harmonic equation on $\mathbb{R}^4$.

Given any $f\in L^1(\mathbb{R}^3)$,  we define $v$ for $(x,t)\in \mathbb{R}^4_+$
\begin{align}
    v(x,t)=\frac{1}{|\mathbb{S}^3|}\int_{\R^3}f(y)\log\frac{|y|^2}{|x-y|^2+t^2}dy.
\end{align}
\begin{lemma}
For any $f(y)\in C^2(\R^3)\cap L^1(\R^3)$, one has 
\begin{align}
\begin{cases}
    \Delta^2v=0&\text{in}\quad\mathbb{R}^4_+,\\
    \partial_t\Delta v=4f&\text{on}\quad \partial \mathbb{R}^4_+,\\
    \partial_t v=0&\text{on}\quad \partial \mathbb{R}^4_+.
\end{cases}
\end{align}
\end{lemma}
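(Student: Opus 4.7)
The plan is to split off the $(x,t)$-independent piece $\frac{1}{|\mathbb{S}^3|}\int f(y)\log|y|^2\,dy$, which is a constant and vanishes under every derivative, and to analyse the residual kernel $\log(|x-y|^2+t^2)=2\log|X-Y|$, where $X=(x,t)\in\mathbb{R}^4_+$ and $Y=(y,0)\in\partial\mathbb{R}^4_+$. Viewing it as a function of $X\in\mathbb{R}^4$, the identity
\begin{align*}
\Delta_X\log|X-Y|^2=\frac{4}{|X-Y|^2}\qquad\text{on }\mathbb{R}^4\setminus\{Y\}
\end{align*}
combined with the fact that $|X-Y|^{-2}$ is, up to a multiplicative constant, the fundamental solution of $-\Delta$ in $\mathbb{R}^4$ yields $\Delta_X^2\log|X-Y|^2=0$ away from $Y$. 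Since the pole $Y$ sits on the boundary while $X\in\mathbb{R}^4_+$, we never meet the singularity, and differentiating under the integral sign gives $\Delta^2 v\equiv 0$ in $\mathbb{R}^4_+$.

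For the Neumann condition, differentiation produces
\begin{align*}
\partial_t v(x,t)=-\frac{2t}{|\mathbb{S}^3|}\int_{\mathbb{R}^3}\frac{f(y)}{|x-y|^2+t^2}\,dy.
\end{align*}
Splitting the domain into a unit ball around $x$ (where $\|f\|_{L^\infty}$ controls the kernel and $\int_{|z|\le 1}(|z|^2+t^2)^{-1}\,dz$ is bounded uniformly in $t\ge 0$) and its complement (where $|f|\in L^1(\mathbb{R}^3)$ dominates) shows the integral is $O(1)$ as $t\to 0^+$, so the prefactor $t$ forces $\partial_t v\to 0$ on $\partial\mathbb{R}^4_+$.

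For the last boundary condition, the same identities give
\begin{align*}
\Delta v(x,t)=-\frac{4}{|\mathbb{S}^3|}\int_{\mathbb{R}^3}\frac{f(y)}{|x-y|^2+t^2}\,dy,\qquad \partial_t\Delta v(x,t)=4\int_{\mathbb{R}^3}f(y)\,P_t(x-y)\,dy,
\end{align*}
where $P_t(z)=\frac{2t}{|\mathbb{S}^3|(|z|^2+t^2)^2}$ is precisely the Poisson kernel of the half-space $\mathbb{R}^4_+$ (the constants match because $|\mathbb{S}^3|=2\pi^2$, so $\frac{2}{|\mathbb{S}^3|}=\frac{1}{\pi^2}=\frac{\Gamma(2)}{\pi^2}$, and $\int_{\mathbb{R}^3}P_t=1$ by a direct computation). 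The classical approximation-to-the-identity property of the Poisson kernel then gives $\partial_t\Delta v(x,t)\to 4f(x)$ as $t\to 0^+$ for any continuous $f$, completing the three required identities.

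The only technical nuisance is legitimising differentiation under the integral and the passage to $t=0$. This is routine: on compact subsets of $\mathbb{R}^4_+$ the integrands and the relevant $(x,t)$-derivatives are dominated at infinity by $\frac{C|f(y)|}{1+|y|}$ (after using the cancellation $\log\frac{|y|^2}{|x-y|^2+t^2}=O(1/|y|)$, whose derivatives decay faster) and by standard $L^1_{\mathrm{loc}}$ estimates on $(|x-y|^2+t^2)^{-k}$ for the small $k$ that appear. The substantive content of the proof is the threefold identification: the logarithmic kernel is biharmonic in four dimensions, its normal derivative is $O(t)$ at the boundary, and $\partial_t\Delta$ of it is precisely the Poisson kernel with the correct normalisation~$4$.
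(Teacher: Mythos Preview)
Your proof is correct and follows essentially the same route as the paper: both compute $\Delta v=-\frac{4}{|\mathbb{S}^3|}\int f(y)(|x-y|^2+t^2)^{-1}\,dy$, recognise $\partial_t\Delta v$ as $4$ times the Poisson convolution on $\mathbb{R}^4_+$, and read off the three conditions. You supply somewhat more detail (the biharmonic/fundamental-solution interpretation of the kernel, and the split of the integral justifying $\partial_t v\to 0$), whereas the paper simply invokes dominated convergence and the Poisson-kernel identity; the substance is the same.
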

\begin{proof}
Using the Lebesgue dominating theorem, it is easy to see $\partial_t v(x,0)=0$ and for any $t>0$
\begin{align}
    \Delta v(x,t)=&\ \frac{-4}{|\mathbb{S}^3|}\int_{\R^3}\frac{f(y)}{|x-y|^2+t^2}dy\\
    \partial_t\Delta v(x,t)=&\ \frac{8}{|\mathbb{S}^3|}\int_{\mathbb{R}^3}\frac{t f(y)}{(|x-y|^2+t^2)^2}dy\\
    \Delta^2v=&\ 0.
\end{align}
Note that $\frac{2}{|\mathbb{S}^3|}\frac{t}{(|x-y|^2+t^2)^2}$ is the Possion kernel of $\Delta$ on $\R^4_+$. Then one has 
\begin{align}
\lim_{t\to 0^+}\partial_t\Delta v(x,t)=4f(y).
\end{align}

\end{proof}
Now suppose that $u$ satisfies the assumptions of Theorem \ref{thm:main-1}. In the following, we will denote
\begin{align}\label{def:v}
    v(x,t)=\frac{1}{|\mathbb{S}^3|}\int_{\R^3}e^{3u(y,0)}\log\frac{|y|^2}{|x-y|^2+t^2}dy.
\end{align}
\begin{lemma}\label{lem:v}
    For $v(x,t)$ defined in \eqref{def:v}, there exists some constant $C>0$ such that 
    \begin{align}
       v(X)\geq  -\alpha \log|X|-C
    \end{align}
    where
    \begin{align}\label{def:alpha}
        \alpha=\frac{2}{|\mathbb{S}^3|}\int_{\R^3}e^{3u(y,0)}dy.
    \end{align}
\end{lemma}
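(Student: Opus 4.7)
The plan is to estimate the kernel in \eqref{def:v} by an elementary pointwise inequality in $y$ and then integrate. Specifically, I would observe that for any $X\in\overline{\mathbb{R}^4_+}$ with $|X|\geq 1$ and any $y\in\mathbb{R}^3$, the triangle inequality together with $|y|\leq |X|\,|y|$ gives
\begin{align*}
    |X-(y,0)| \leq |X|+|y| \leq |X|(1+|y|).
\end{align*}
Squaring and taking logarithms then produces the pointwise bound
\begin{align*}
    \log\frac{|y|^2}{|x-y|^2+t^2} \geq -2\log|X| + 2\log\frac{|y|}{1+|y|}.
\end{align*}

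Next I would multiply by $\frac{1}{|\mathbb{S}^3|}e^{3u(y,0)}$ and integrate in $y\in\mathbb{R}^3$. By \eqref{def:alpha} the first term on the right yields $-\alpha\log|X|$, while the second yields the constant
\begin{align*}
    C' := \frac{2}{|\mathbb{S}^3|}\int_{\mathbb{R}^3}e^{3u(y,0)}\log\frac{|y|}{1+|y|}\,dy.
\end{align*}
Choosing $C=\max(0,-C')$ will give the claimed inequality for $|X|\geq 1$; for $|X|\leq 1$ the local boundedness of $v$, which follows from the local integrability of the log kernel against $e^{3u(y,0)}\,dy$ in $\mathbb{R}^3$ together with the continuous dependence on $X$, allows one to absorb this range into $C$.

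The step I expect to require the most care is verifying $C'>-\infty$. For $|y|$ large, $\log\frac{|y|}{1+|y|}\to 0^-$ and is bounded, so the $L^1$ assumption \eqref{finite-vol} on $e^{3u(\cdot,0)}$ controls the tail. For $|y|$ near $0$, $\log\frac{|y|}{1+|y|}\sim \log|y|$, but in three dimensions this is integrable on bounded sets (in polar coordinates, $r^2|\log r|$ is integrable near $r=0$), while $e^{3u(y,0)}$ is bounded on compact sets by the assumption $u\in C^4(\overline{\mathbb{R}^4_+})$. No deeper analytic difficulty is anticipated.
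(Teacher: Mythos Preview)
Your argument is correct and is in fact slicker than the paper's. The paper follows Lin's original scheme: for $|X|\ge 4$ it splits $\mathbb{R}^3$ into the ``near'' set $A_1=\{\,y:|(y,0)-X|\le |X|/2\,\}$ and the ``far'' set $A_2=\mathbb{R}^3\setminus A_1$, observes that the integrand is nonnegative on $A_1$, and then on $A_2$ distinguishes $|y|\ge 2$ (where $|X-(y,0)|\le |X|\,|y|$) from $|y|\le 2$ (where $\log|X-(y,0)|\le \log|X|+C$). Your single pointwise inequality $|X-(y,0)|\le |X|(1+|y|)$ for $|X|\ge 1$ replaces all of this case analysis in one stroke and leads directly to the same conclusion; the verification that $C'>-\infty$ is exactly the right justification and uses only \eqref{finite-vol} and the local boundedness of $e^{3u(\cdot,0)}$. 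What the paper's decomposition buys is a slightly sharper intermediate estimate (the $A_1$ piece is actually $\ge 0$, not just $\ge -C$), but for the stated lemma this refinement is irrelevant. Both arguments leave the compact range of $|X|$ to be absorbed into the constant, and your sketch of why $v$ is locally bounded below there is adequate; if you want to be fully explicit, just note that for $|X|\le 1$ one has $|X-(y,0)|\le 1+|y|$, so the kernel is bounded below by $2\log\frac{|y|}{1+|y|}$ uniformly in $X$, and the same integrability check applies.
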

\begin{proof}
    The proof is essentially contained in \cite{lin1998classification}. For readers' convenience, we present it here.

    For $|x|\geq 4$, we decompose $\mathbb{R}^3=A_1\cup A_2$, where $A_1=\{y||(y,0)-X|\leq |X|/2\}$ and $A_2=\{y||(y,0)-X|\geq |X|/2\}$. For $y\in A_1$, one has $|y|\geq |X|-|X-(y,0)|\geq |X|/2\geq |X-(y,0)|$. Consequently, we have $\log |y|/|X-(y,0)|\geq 0$ and 
    \begin{align}
    \int_{A_1}e^{3u(y,0)}\log\frac{|y|^2}{|x-y|^2+t^2}dy\geq 0.
    \end{align}

    For $y\in A_2$, one has $|X-(y,0)|\leq |X||y|$ if $|y|\geq 2$ and $\log|X-(y,0)|\leq \log |X|+C$ if $|X|\geq 4$ and $|y|\leq 2$. Thus
    \begin{align}\label{D2v-eqn}
        v(x)\geq&\  \frac{1}{|\mathbb{S}^3|}\int_{A_2}e^{3u(y,0)}\log\frac{|y|^2}{|x-y|^2+t^2}dy\\
        \geq &\ -\frac{2}{|\mathbb{S}^3|}\log |X|\int_{A_2}e^{3u(y,0)}dy+\frac{1}{|\mathbb{S}^3|}\int_{|y|\leq 2}e^{3u(y,0)}\log\frac{|y|^2}{|X-(y,0)|^2}dy\\
        \geq &\ -\alpha \log |X|-C.
    \end{align}

\end{proof}
\begin{lemma}\label{lem:deltau}
Suppose $u$ satisfies \eqref{main-eq} and \eqref{finite-vol}. Then there exists a constant $C_1\geq 0$ such that
\begin{align}
    \Delta u(x,t)=-\frac{4}{|\mathbb{S}^3|}\int_{\R^3}\frac{e^{3u(y,0)}}{|x-y|^2+t^2}dy-C_1.
\end{align}
Moreover, there exist constants $c_*\leq 0$, $a_i\leq 0$, $i=1,2,3$ such that  
\begin{align}\label{u-v=t+p}
    u-v=c_*t^2+\sum_{i=1}^3a_i(x_i-x_i^0)^2+c_0.
\end{align}
\end{lemma}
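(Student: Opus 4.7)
The plan is to set $w := u - v$, and reduce the lemma to structural properties of the biharmonic function $w$ via its even reflection across $\{t=0\}$. By the preceding lemma, $v$ solves the same biharmonic Neumann problem as $u$ with matching nonlinear Neumann data $\partial_t\Delta v|_{t=0} = 4e^{3u(x,0)}$, so $\Delta^2 w = 0$ in $\mathbb{R}^4_+$ with $\partial_t w|_{t=0} = 0$ and $\partial_t \Delta w|_{t=0} = 0$. The identity $\partial_t \Delta w = \bar\Delta(\partial_t w) + \partial_t^3 w$ combined with these two Neumann conditions forces $\partial_t^3 w|_{t=0} = 0$, so the even reflection $\tilde w(x,t) := w(x,|t|)$ is $C^4$ across $\{t=0\}$ and biharmonic on all of $\mathbb{R}^4$; in particular $\tilde h := \Delta\tilde w$ is harmonic on $\mathbb{R}^4$. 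One also sets $\tilde v(x,t) := v(x,|t|)$ and $\tilde u(x,t) := u(x,|t|)$, the latter of class $C^2$ on $\mathbb{R}^4$ from $\partial_t u|_{t=0}=0$.

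The first claim reduces to showing $\tilde h$ is a nonpositive constant. Pizzetti's formula for the biharmonic $\tilde w$ in dimension four reads
\[
\overline{\tilde w}_{X_0}(r) \;=\; \tilde w(X_0) + \frac{r^2}{8}\tilde h(X_0)
\]
at every $X_0 \in \mathbb{R}^4$, where $\overline{f}_{X_0}(r)$ denotes the spherical average over $\partial B_r(X_0)$. Writing condition (ii) in polar coordinates around $X_0$ yields $\int_0^\infty r^3\overline{e^{4\tilde u}}_{X_0}(r)\,dr = |\mathbb{S}^3|^{-1}\int e^{4\tilde u}$, a quantity independent of $X_0$. A standard Chebyshev argument then produces a sequence $r_k \to \infty$ with $r_k^4\overline{e^{4\tilde u}}_{X_0}(r_k) \to 0$, whence Jensen's inequality yields $\overline{\tilde u}_{X_0}(r_k) \leq -\log r_k + o(1)$. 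Lemma \ref{lem:v} supplies $\tilde v(X) \geq -\alpha\log|X| - C$; averaging and noting $\overline{\log|X|}_{X_0}(r) = \log r + O(|X_0|^2/r^2)$ gives $\overline{\tilde v}_{X_0}(r) \geq -\alpha\log r - C'$ uniformly for $r \geq 2|X_0|$. Subtracting and substituting into Pizzetti,
\[
\tilde w(X_0) + \frac{r_k^2}{8}\tilde h(X_0) \;\leq\; (\alpha-1)\log r_k + O(1);
\]
dividing by $r_k^2$ and sending $k \to \infty$ forces $\tilde h(X_0) \leq 0$ for every $X_0$. Since a harmonic function on $\mathbb{R}^4$ bounded above is constant, $\tilde h \equiv -C_1$ with $C_1 \geq 0$, which is the first assertion.

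For the quadratic structure of $u - v$, observe now that $\Delta u = \Delta v - C_1 \leq 0$, so $\tilde u$ is genuinely superharmonic and $\overline{\tilde u}_{X_0}(r)$ is non-increasing in $r$. Combined with the $X_0$-uniform bound on $\int r^3\overline{e^{4\tilde u}}_{X_0}(r)\,dr$, the monotonicity upgrades the subsequential Jensen estimate to the uniform bound $\overline{\tilde u}_{X_0}(r) \leq -\log r + C^*$ valid for all $r>0$ and all $X_0$, with $C^*$ depending only on $\int e^{4\tilde u}$. Set $H := \tilde w + \tfrac{C_1}{8}|X|^2$, which is harmonic on $\mathbb{R}^4$ and even in $t$. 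The mean-value identity $H(X_0) = \overline{\tilde w}_{X_0}(r) + \tfrac{C_1}{8}(|X_0|^2 + r^2)$, evaluated at $r = 2|X_0|$ and combined with the uniform Jensen bound and the uniform lower bound on $\overline{\tilde v}_{X_0}$, yields $H(X_0) \leq C(1 + |X_0|^2)$ uniformly in $X_0$. A standard Harnack argument applied to the nonnegative harmonic function $X \mapsto (\sup_{B_R} H - H(RX))/R^2$ then upgrades this one-sided bound to the two-sided bound $|H(X_0)| \leq C'(1+|X_0|^2)$, and Liouville's theorem for harmonic functions of polynomial growth identifies $H$ with a harmonic polynomial of degree at most two.

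Because $H$ is harmonic, of degree $\leq 2$, and even in $t$, it has the form $H = h_0 + \sum_{i=1}^3 h_ix_i + \sum_{i,j\leq 3} h_{ij}x_ix_j + h_tt^2$ with $\sum h_{ii} + h_t = 0$. Diagonalizing the symmetric matrix $(h_{ij} - \tfrac{C_1}{8}\delta_{ij})_{i,j\leq 3}$ by an orthogonal change of $x$-coordinates rewrites $\tilde w = H - \tfrac{C_1}{8}|X|^2$ as $c_*t^2 + \sum_i(a_ix_i^2 + h_i'x_i) + c_0$ with $c_* = h_t - \tfrac{C_1}{8}$, and completing the square in each $x_i$ with $a_i \neq 0$ absorbs the corresponding linear term into a shift $x_i - x_i^0$. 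Condition (ii) finally forces $c_* \leq 0$, $a_i \leq 0$, and $h_i' = 0$ whenever $a_i = 0$, for otherwise $\tilde u = \tilde v + \tilde w \to +\infty$ along the offending $x_i$-axis or in the $t$-direction, contradicting $\int e^{4\tilde u} < \infty$. The chief difficulty is propagating the Jensen/Pizzetti inequality into a uniform pointwise polynomial upper bound on $H$; this step crucially relies on the monotonicity of the spherical averages of $\tilde u$, which only becomes available after the first part of the lemma is established and $\tilde u$ is known to be superharmonic.
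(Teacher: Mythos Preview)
Your proof is correct. The first half---even reflection of $w=u-v$, Pizzetti's formula, Jensen's inequality combined with the finite-volume condition \eqref{finite-vol-in}, and Liouville's theorem to conclude $\Delta\hat w\equiv -C_1\leq 0$---is essentially identical to the paper's argument, only packaged slightly differently (you pass to a subsequence $r_k$ where the paper integrates in $r$ and reads off $L^1$ membership; both are fine).

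The second half, however, takes a genuinely different route from the paper. Once $\Delta\hat w=-C_1$ is known, the paper simply invokes a third-order interior estimate for biharmonic functions from \cite{martinazzi2009classification},
\[
|D^3\hat w|(X_0)\;\leq\;\frac{C}{r^3}\fint_{B_r(X_0)}\hat w\,d\sigma,
\]
and combines it with Pizzetti (which now gives $\fint_{B_r}\hat w=\hat w(X_0)-\tfrac{C_1}{8}r^2=O(r^2)$) to conclude $D^3\hat w\equiv 0$ directly. Your argument instead exploits a consequence of the first part that the paper does not use: since $\Delta u=\Delta v-C_1<0$, the reflection $\tilde u$ is superharmonic, so its spherical averages are monotone in $r$. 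This monotonicity upgrades the subsequential Jensen bound to a bound uniform in the center $X_0$, which you then feed into the harmonic function $H=\tilde w+\tfrac{C_1}{8}|X|^2$, obtain a one-sided quadratic upper bound, use Harnack to make it two-sided, and finish with the classical Liouville theorem for harmonic functions of polynomial growth. The trade-off is clear: the paper's route is shorter but leans on an external biharmonic derivative estimate (whose statement as written in the paper is in fact slightly garbled, since the right-hand side can be negative), while your route is longer but entirely self-contained and makes nice structural use of the superharmonicity that the lemma itself produces. The final extraction of the sign constraints $c_*\leq 0$, $a_i\leq 0$, and the vanishing of residual linear terms is handled the same way in both proofs, via the lower bound $v\geq -\alpha\log|X|-C$ from Lemma~\ref{lem:v} together with the finite-volume hypotheses.
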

\begin{proof}
Denote $w=u-v$ where $v$ is defined in \eqref{def:v}. Then $w$ satisfies
\begin{align}
\begin{cases}
    \Delta^2w=0&\text{in}\quad\mathbb{R}^4_+,\\
    \partial_t\Delta w=0&\text{on}\quad \partial \mathbb{R}^4_+,\\
    \partial_t w=0&\text{on}\quad \partial \mathbb{R}^4_+.
\end{cases}
\end{align}
We extend $w$ by $w(x,t)=w(x,-t)$ for $t<0$.  We denote $\hat w$ this new function on $\mathbb{R}^4$. It follows from the mean value property of harmonic functions that $\Delta \hat w$ is smooth is $\R^4$. Consequently, $\hat w$ is also smooth in $\R^4$.

It follows from Lemma \ref{lem:v} that for $t>0$
\begin{align}
w(x,t)= u(x,t)-v(x,t)\leq u(x,t)+\alpha \log|(x,t)|+C. 
\end{align}
Thus $\hat w(X)\leq \hat u(X)+\alpha \log |X|+C$, where $\hat u$ is the even extension of $u$ to $\R^4$. By Pizzetti's formula (see \cite{martinazzi2009classification}), we have
\begin{align}\label{pizzetti}
    \frac{r^2}{8}\Delta\hat w(X_0)=\fint_{\partial B_r(X_0)}\hat wd\sigma-\hat w(X_0).
\end{align}
By Jensen's inequality
\begin{align}
    \exp\left(\frac{r^2}{2}\Delta\hat w(X_0)\right)&\leq e^{-4\hat w(X_0)}\exp\left(4\fint_{\partial B_{r}(X_0)}\hat w d\sigma\right)\\
    &\leq e^{-4\hat w(X_0)}\fint_{\partial B_r(X_0)}e^{4\hat w}d\sigma.
\end{align}
Since $\hat w(X)\leq \hat u(X)+\alpha \log |X|+C$ and \eqref{finite-vol-in}, then  $r^{3-4\alpha}\exp(\frac{r^2}{2}\Delta\hat w(X_0))\in L^1[1,\infty)$. Thus $\hat w(X_0)\leq 0$ for all $x_0\in \R^4$. By Liouville's Theorem, $\Delta \hat w(X)\equiv - C_1$ in $\R^4$ for some constant $C_1\geq 0$.

For bi-harmonic functions, one has the following fact that (for instance, see \cite{martinazzi2009classification})
\begin{align}
    |D^3\hat w|(X_0)\leq  \frac{C}{r^3}\fint_{B_r(X_0)}\hat w d\sigma
\end{align}
holds for some universal constant $C$.
However, \eqref{pizzetti} and $\Delta \hat w(X)=-C_1$ implies that 
\begin{align}
    \fint_{B_r(X_0)}\hat w d\sigma =O(r^2)
\end{align}
Thus $|D^3 \hat w|(X_0)=0$. Therefore $\hat w$ is a polynomial of degree at most 2. By the boundary condition of $w$ and even symmetry of $\hat w$, one has $\hat w=c_*t^2+p(x)$ where $p(x)$ has degree at most 2. 

Since $ w(X)\leq  u(X)+\alpha \log |X|+C$ for $X\in \R^4_+$ and $u$ satisfies \eqref{finite-vol-in}, then $c_*<0$. Moreover, after an orthorgonal transformation, we can assume $p(x)=\sum_{i=1}^3a_{i}x_i^2+b_ix_i+c_0$. Since $\int_{\R^3}e^{3u(x,0)}dx<\infty$, then we must have $a_i\leq 0$ and  $b_i=0$ whenever $a_i=0$. Thus 
\begin{align}
    p(x)=\sum_{i}a_i(x-x_i^0)^2+c_0.
\end{align}
The proof is complete.
% It is easy to see that 
% \begin{align}
%     \int_{\mathbb{R}^4}e^{w}dX=2\int_{\mathbb{R}^4_+}e^{w}dX=2\int_{\mathbb{R}^4_+}e^{u}dX<\infty.
% \end{align}
\end{proof}
\bigskip
\begin{lemma}\label{lem:boundary}
Suppose that $u$ satisfies the assumptions of Theorem \ref{thm:main-1}. If  $u(x,0)=o(|x|^2)$ or $\bar\Delta u(x,0)=o(1)$ as $|x|\to \infty$,
then there exists $\lambda>0$ and $a\in \R^3$ such that
\begin{align}\label{ux0=log}
    u(x,0)=\log\left(\frac{2\lambda}{\lambda^2+|x-a|^2 }\right).
\end{align}
Consequently there exist some $c\leq 0$ such that 
\begin{align}\label{poisson-ext}
    u(x,t)=\frac{4}{\pi^2}\int_{\R^3}\frac{t}{(|x-y|^2+t^2)^3}\log\left(\frac{2\lambda}{\lambda^2+|y-a|^2 }\right)dx+ct^2
\end{align}
\end{lemma}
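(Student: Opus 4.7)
The strategy is to use Lemma~\ref{lem:deltau} to reduce the problem to classifying the boundary trace $u(\cdot,0)$, and then to reconstruct the interior via a bi-harmonic extension. First I would apply Lemma~\ref{lem:deltau} to write $u(x,t)=v(x,t)+c_\ast t^2+\sum_{i=1}^3 a_i(x_i-x_i^0)^2+c_0$ with $c_\ast,a_i\le 0$. To kill the quadratic part of the polynomial I use the decay hypothesis in two cases. Under $u(x,0)=o(|x|^2)$, the same decomposition $\R^3=A_1\cup A_2$ as in Lemma~\ref{lem:v} gives not only the lower but also a matching upper bound $v(x,0)=O(\log|x|)$, so the quadratic polynomial is $o(|x|^2)$, forcing each $a_i=0$. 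Under $\bar\Delta u(x,0)=o(1)$, I differentiate inside the integral to obtain
\[
\bar\Delta u(x,0)=\bar\Delta v(x,0)+2\sum_i a_i=-\frac{2}{|\mathbb{S}^3|}\int_{\R^3}\frac{e^{3u(y,0)}}{|x-y|^2}\,dy+2\sum_i a_i;
\]
the integral tends to zero as $|x|\to\infty$ by dominated convergence and \eqref{finite-vol}, so $\sum_i a_i=0$, and since each $a_i\le 0$ every $a_i$ vanishes.

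With the quadratic part eliminated, the identity $u(x,0)=v(x,0)+c_0$ becomes the integral equation
\[
u(x,0)-c_0=\frac{1}{|\mathbb{S}^3|}\int_{\R^3}e^{3u(y,0)}\log\frac{|y|^2}{|x-y|^2}\,dy,\qquad \int_{\R^3}e^{3u(y,0)}\,dy<\infty.
\]
Since the kernel $\log|x|^{-1}$ is, up to a positive multiplicative constant, the fundamental solution of $(-\bar\Delta)^{3/2}$ on $\R^3$, this is the weak form of the fractional $Q$-curvature equation $(-\bar\Delta)^{3/2}u(\cdot,0)=C\,e^{3u(\cdot,0)}$ for some $C>0$. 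I would then invoke the moving-sphere (or moving-plane in integral form) classification of positive solutions of such a nonlocal equation with finite total mass, in the spirit of Chen--Li--Ou, Xu, and Jin--Li--Xiong, to conclude that there exist $\lambda>0$ and $a\in\R^3$ with
\[
u(x,0)=\log\!\left(\frac{2\lambda}{\lambda^2+|x-a|^2}\right);
\]
the additive constant $c_0$ is then automatically pinned down by the self-consistency of the integral equation.

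Finally, since $u(x,t)-c_\ast t^2$ is bi-harmonic in $\R^4_+$ with vanishing Neumann trace and Dirichlet trace equal to $u(\cdot,0)$, the interior representation \eqref{poisson-ext} is recovered from the bi-harmonic Dirichlet/Neumann Poisson kernel on the half-space: writing any such extension as $u_1+tu_2$ with $u_1,u_2$ harmonic and enforcing the two boundary conditions produces an explicit kernel (equivalently, applying $K=P-t\,\partial_t P$ to $u(\cdot,0)$, where $P$ is the harmonic Poisson kernel in $\R^4_+$), and the resulting formula matches \eqref{poisson-ext} with $c=c_\ast\le 0$. The main obstacle is the classification step in the middle paragraph: adapting the moving-sphere technique to the $\R^3$ log-kernel integral equation with finite total mass, and handling the asymptotic behavior of $u(\cdot,0)$ carefully enough to start the moving family; once that is granted, Steps~1 and~3 are essentially bookkeeping.
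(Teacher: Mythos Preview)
Your proposal follows essentially the same route as the paper: apply Lemma~\ref{lem:deltau}, use the growth hypothesis to force $a_1=a_2=a_3=0$, reduce to the integral equation on $\R^3$, invoke the known classification of its solutions, and then recover $u$ in the interior via the bi-harmonic Poisson kernel together with \eqref{u-v=t+p}.

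Two small remarks. First, the classification step you flag as the ``main obstacle'' is not one in this setting: the paper simply cites \citet{xu2005uniqueness}, who proved precisely that any solution of
\[
u(x,0)=\frac{1}{|\mathbb{S}^3|}\int_{\R^3}e^{3u(y,0)}\log\frac{|y|}{|x-y|}\,dy+c_0,\qquad \int_{\R^3}e^{3u(y,0)}\,dy<\infty,
\]
has the form \eqref{ux0=log}; no adaptation of moving spheres is needed here. Second, your identification of the bi-harmonic Dirichlet/Neumann kernel as $K=P-t\,\partial_t P$ is correct and in fact gives $K=\dfrac{4}{\pi^2}\dfrac{t^{3}}{(|x-y|^2+t^2)^3}$ in $\R^4_+$; the paper states the same conclusion by citing \cite[Lemma~2.2]{sun2016classification} (the $t$ in \eqref{poisson-ext} should read $t^3$). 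Your extra justification of why each hypothesis forces $a_i=0$ is more detailed than the paper's one-line assertion and is a welcome addition.
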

\begin{proof}
    Applying Lemma \ref{lem:deltau} and letting $t\to 0^+$ in \eqref{u-v=t+p}, we have 
    \begin{align}
    u(x,0)=\frac{1}{|\mathbb{S}^3|}\int_{\R^3}e^{3u(y,0)}\log\frac{|y|}{|x-y|}dy+\sum_{i=1}^3a_i(x_i-x_i^0)^2+c_0.
    \end{align}
    If $u(x)=o(|x|^2)$ or $\Delta u(x,0)=o(1)$, then $a_1=a_2=a_3= 0$. Thus 
    \begin{align}\label{ux0=}
    u(x,0)=\frac{1}{|\mathbb{S}^3|}\int_{\R^3}e^{3u(y,0)}\log\frac{|y|}{|x-y|}dy+c_0.
    \end{align}
    The solutions to such equation have been classified by \citet{xu2005uniqueness}. More precisely, $u$ must take the form \eqref{ux0=log} for some $a\in \R^3$ and $\lambda>0$.

    Once we know the boundary data, then $v(x,t)$ defined in \eqref{def:v} satisfies $\Delta^2 v=0$ and $\partial_tv(x,0)=0$, then using the Poisson kernel of $\Delta^2$ on $\mathbb{R}^4_+$ (see \cite[Lemma 2.2]{sun2016classification}), one knows that $v(x,t)$ takes the form \eqref{poisson-ext}.
    \begin{align}\label{v-explicit}
    v(x,t)=\frac{4}{\pi^2}\int_{\R^3}\frac{t}{(|x-y|^2+t^2)^3}\log\left(\frac{2\lambda}{\lambda^2+|y-a|^2 }\right)dx.
    \end{align}
    Then $u(x,t)$ takes the form \eqref{poisson-ext} by \eqref{u-v=t+p} 
    
\end{proof}

It seems hard to integrate \eqref{poisson-ext} out explicitly. We get around this difficulty by constructing a solution to \eqref{main-eq} directly.

%\section{The explicit solution}

\begin{lemma}\label{lem:ual}
 For any $\lambda>0$ and $a\in \mathbb{R}^3$, \eqref{def:u}
satisfies \eqref{main-eq}.
\end{lemma}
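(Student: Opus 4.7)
My plan is to prove Lemma \ref{lem:ual} by direct computation, exploiting the fact that in dimension four the function $1/\rho^2$ is harmonic on $\mathbb{R}^4\setminus\{0\}$. By translation invariance in $x$ I may assume $a=0$; introducing $\tilde X=(x,t+\lambda)$ and $\rho=|\tilde X|$, I will rewrite the candidate as
\begin{align*}
u_{0,\lambda}(x,t)=\log(2\lambda)-\log\rho^2+\frac{2\lambda t}{\rho^2}.
\end{align*}
Since this translation sends the Cartesian Laplacian in $(x,t)$ to the radial Laplacian in $\tilde X$, the whole verification will reduce to the single identity $\Delta|\tilde X|^\alpha=\alpha(\alpha+2)|\tilde X|^{\alpha-2}$ in $\mathbb{R}^4$ together with the Leibniz rule $\Delta(tf)=t\Delta f+2\partial_t f$.

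First I will compute $\Delta u$. The radial identity gives $\Delta(-\log\rho^2)=-4/\rho^2$ and $\Delta(1/\rho^2)=0$, so applying Leibniz with $f=1/\rho^2$ gives $\Delta(t/\rho^2)=2\partial_t(1/\rho^2)=-4(\lambda+t)/\rho^4$, and therefore
\begin{align*}
\Delta u=-\frac{4}{\rho^2}-\frac{8\lambda(\lambda+t)}{\rho^4}.
\end{align*}
Next I will show $\Delta^2 u=0$. The first summand is already killed. For the second I write $(\lambda+t)/\rho^4=\tilde X_4\,\rho^{-4}$ and use $\Delta\rho^{-4}=(-4)(-2)\rho^{-6}=8/\rho^6$ together with $\partial_{\tilde X_4}\rho^{-4}=-4\tilde X_4/\rho^6$, so another application of Leibniz gives $\Delta(\tilde X_4\rho^{-4})=8\tilde X_4/\rho^6-8\tilde X_4/\rho^6=0$. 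Hence $\Delta^2u\equiv0$ throughout $\mathbb{R}^4_+$; no issues at the singularity arise because $\tilde X=0$ corresponds to $(x,t)=(0,-\lambda)$, which lies outside the closed half-space.

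Finally I will check the two boundary conditions at $t=0$, writing $\rho_0^2:=\lambda^2+|x|^2$. Differentiating in $t$, the summand $-\log\rho^2$ contributes $-2\lambda/\rho_0^2$ and $2\lambda t/\rho^2$ contributes $+2\lambda/\rho_0^2$; these cancel, so $\partial_t u|_{t=0}=0$. Differentiating the closed form for $\Delta u$ above in $t$ and then setting $t=0$ gives the two contributions $8\lambda/\rho_0^4$ and $-8\lambda/\rho_0^4+32\lambda^3/\rho_0^6$, so $\partial_t\Delta u|_{t=0}=32\lambda^3/\rho_0^6$. On the other hand $u(x,0)=\log(2\lambda/\rho_0^2)$ because the $2\lambda t/\rho^2$ term vanishes at $t=0$, so $4e^{3u(x,0)}=4\cdot 8\lambda^3/\rho_0^6=32\lambda^3/\rho_0^6$, matching $\partial_t\Delta u|_{t=0}$ exactly.

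I do not foresee any real obstacle: the lemma is an exercise in organized calculation. The one thing I will be careful about is using the dimensional coincidences $\Delta(1/\rho^2)=0$ and $\Delta(\tilde X_4/\rho^4)=0$ early, since these are precisely the identities that make the bi-harmonicity and the boundary matching close up with such clean constants.
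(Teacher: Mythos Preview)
Your proof is correct and follows essentially the same approach as the paper: both split $u_{a,\lambda}$ into its logarithmic and rational pieces, compute $\Delta u$ directly, and then verify the boundary identities at $t=0$. Your use of the radial identity $\Delta\rho^{\alpha}=\alpha(\alpha+2)\rho^{\alpha-2}$ together with the Leibniz rule $\Delta(tf)=t\Delta f+2\partial_t f$ makes the biharmonicity step (which the paper simply asserts) more transparent, and keeping general $\lambda$ rather than specializing to $\lambda=1$ is harmless, but these are cosmetic differences rather than a different route.
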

\begin{proof}
We only prove the lemma for $a=0$ and $\lambda=1$. Let $u=u_1+u_2$
\begin{align}
    u_1=\log\left(\frac{2}{(1+ t)^2+|x|^2 }\right),\quad u_2=\frac{2t}{(1+t)^2+|x|^2}.
\end{align}
Then it is easy to see 
\begin{align}
    \partial_t u_1(x,0)=-\frac{2}{1+|x|^2}=-\partial_t u_2(x,0)
\end{align}
Therefore $\partial_tu(x,0)=0$ for $x\in \mathbb{R}^3$. We continue taking derivatives
\begin{align}\label{Lap-u}
    \Delta u_1=-\frac{4}{(1+t)^2+|x|^2},\quad \Delta u_2=-\frac{8 (t+1)}{\left((1+t)^2+|x|^2\right)^2}
\end{align}
\begin{align}
    \partial_t\Delta u_1(x,0)=\frac{8}{\left(1+|x|^2\right)^2},\quad \partial_t \Delta u_2(x,0)=-\frac{8 \left(|x|^2-3\right)}{\left(1+|x|^2\right)^3}.
\end{align}
Therefore
\begin{align}
    \partial_t\Delta u(x,0)=\frac{32}{\left(1+|x|^2\right)^3}=4e^{3u(x,0)}.
\end{align}
Last, we have
\begin{align}
    \Delta^2u_1(x,t)=\Delta^2 u_2(x,t)=0
\end{align}
This implies $\Delta^2 u=0$ in $\mathbb{R}^4$. Thus $u$ satisfies \eqref{main-eq}. It is easy to see that \eqref{finite-vol} holds.
\end{proof}

%\begin{remark}
%It is hard to explain how we come up with a solution like \eqref{def:u}. The first term on the RHS of \eqref{def:u} is used to match the Dirichlet boundary condition \eqref{ux0=log}, and the second term should contain a factor $t$. To satisfy the Neumann boundary condition $\partial_t u(x,0)=0$, the other factor in the second term can be taken as the derivative of the first term w.r.t $t$ on the boundary. After multiplying appropriate constants, the $\partial_t\Delta u(x,0)=4e^{3u(x,0)}$ is also satisfied by such construction. 
%\end{remark}

\begin{proof}[Proof of Theorem \ref{thm:main-1}] If $\bar\Delta u(x,0)=o(1)$ as $|x|\to \infty$, then Lemma \ref{lem:boundary} implies that $u(x,t)=v(x,t)+ct^2$ where $v$ is defined in \eqref{v-explicit}. Such $v$ is uniquely determined and satisfies \eqref{D2v-eqn}. However, in Lemma \ref{lem:ual} we find $u_{a,\lambda}$ also satisfies the same equation. Therefore $v=u_{a,\lambda}$. The proof is complete.
\end{proof}

\begin{proof}[Proof of Corollary \ref{corcla}]
Taking $t=0$ in \eqref{u=int} implies that $u$ satisfies \eqref{ux0=}, which leads to \eqref{ux0=log}. Then Lemma \ref{lem:boundary} holds with $c=0$ in \eqref{poisson-ext}. The result follows from Lemma \ref{lem:ual}.
\end{proof}

\section*{Acknowledgement} L. Sun was partially supported by CAS Project for Young Scientists in Basic Research, Grant No. YSBR$\text{-}$031. C. B. Ndiaye was partially supported by NSF grant DMS--2000164. We would like to thank Jingang Xiong for helpful suggestions.

\bibliographystyle{plainnat}
\bibliography{ref.bib}

\end{document}